\newtheorem{lemma}{Lemma}
\newtheorem{theorem}{Theorem}
\title{A Generalization of Mordell to Ternary Quadratic Forms}
\author{Sarah Blackwell}
\author{Gabriel Durham}
\author{Katherine Thompson}
\author{Tiffany Treece}
\begin{document}
\maketitle
\begin{abstract}
Mordell in 1958 gave a new proof of the three squares theorem. We generalize those techniques to characterize the integers represented by the remaining six ``Ramanujan-Dickson ternaries'' as well as three other ternary forms.
\end{abstract}
\section{Introduction and Statement of Results}
Given a quadratic form, a natural question to ask concerns the values represented by the form. One of the invaluable applications of such analysis of ternary quadratic forms is in addressing the question in the \textit{quaternary} setting. In \cite{Ramanujan}, Ramanujan proved the existence of $55$ quaternary diagonal positive definite quadratic forms which represent all $n \in \mathbb N$ (though, of course, it was later discovered one of those forms failed to represent $n=15$. The remaining $54$ do, however, give the complete list). His method was that of escalation--a technique appearing later in more general quadratic forms results such as in Bhargava's proof of the $15$ Theorem \cite{Bhargava}. In creating his list of $55$ quaternary diagonal forms, Ramanujan produced the following list of ternary forms: 
\begin{center}
$x^2+y^2+z^2$, $x^2+y^2+2z^2$, $x^2+y^2+3z^2$, $x^2+2y^2+2z^2$, $x^2+2y^2+3z^2$, $x^2+2y^2+4z^2$, $x^2+2y^2+5z^2$.
\end{center}
Upon knowing which integers fail to be represented by each of these ternaries, Ramanujan was able to create the list of quaternary quadratic forms that represent all $n \in \mathbb N$. He made (correct) claims about the values represented and not represented by the seven ternaries; however, he included no proofs. Dickson \cite{Dickson} some years later provided what are commonly accepted as the first complete set of proofs, and these seven forms subsequently are referred to in the literature as the Ramanujan-Dickson ternaries. Dickson's methods are predominantly algebraic in flavor; in addition to standard congruential arguments (used mostly to state which values a form fails to represent) he used reduction techniques.\\
\\
Some thirty years later, Mordell published a new proof of the three squares theorem \cite{Mordell}. His argument differed from Dickson's in several crucial ways. First, Mordell avoided all reduction methods. Second, Mordell's algebraic manipulations of the forms were much more general than Dickson's; however, Mordell then needed to make use of a shortest-vector theorem of Gauss which in turn depends upon the determinant of the form. Subsequently, Mordell \textit{only} provided a proof of the three squares theorem. We know now that with significant alteration, the tools generalize. In particular, we generalize the method in such a way that the shortest-vector theorem (and hence any condition on the determinant) is unnecessary. We begin by presenting what we are calling ``Mordell-style proofs'' of the numbers represented by the remaining six Ramanujan-Dickson ternaries.
\begin{theorem}
\begin{itemize}\label{RD}
\item[(a)] A positive integer $m$ is represented by $x^2+y^2+2z^2$ if and only if $m \neq 4^k(16 \ell +14)$.
\item[(b)] A positive integer $m$ is represented by $x^2+y^2+3z^2$ if and only if $m \neq 9^k(9 \ell +6)$.
\item[(c)] A positive integer $m$ is represented by $x^2+2y^2+2z^2$ if and only if $m \neq 4^k (8 \ell +7)$.
\item[(d)] A positive integer $m$ is represented by $x^2+2y^2+3z^2$ if and only if $m \neq 4^k(16 \ell +10)$.
\item[(e)] A positive integer $m$ is represented by $x^2+2y^2+4z^2$ if and only if $m \neq 4^k (16 \ell+14)$.
\item[(f)] A positive integer $m$ is represented by $x^2+2y^2+5z^2$ if and only if $m \neq 25^k (25 \ell \pm 10)$.
\end{itemize}
\end{theorem}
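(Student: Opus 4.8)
The plan is to give a uniform ``Mordell-style'' argument for each of the six forms $Q$ listed, establishing in each case that the stated congruence obstruction is the \emph{only} obstruction. The necessity direction is routine: for each form one checks by a finite $2$-adic (or, in parts (b) and (f), $3$-adic or $5$-adic) congruence computation that integers of the excluded shape cannot be represented, together with the standard descent observation that if $p^2 \mid m$ and $m/p^2$ is not represented then neither is $m$ (here $p=2,3,5$ according to the form). So the substance is sufficiency: given $m$ not of the excluded form, produce an explicit representation $Q(x,y,z)=m$.

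For sufficiency I would follow Mordell's template. First, reduce to the case that $m$ is ``primitive'' in the relevant sense (squarefree away from the bad prime, or at least not divisible by the square of the bad prime with bad residue) by peeling off factors of $4$, $9$, or $25$; this is where the $4^k$, $9^k$, $25^k$ in the statements come from. Next, the key algebraic step: for the target $m$, one writes down an auxiliary identity expressing $N \cdot m$ (for a suitable small multiple $N$, often $N=1$, $2$, $3$, or a small square) as a value of $Q$ in variables that are themselves linear forms in new integer parameters, or one sets up a congruence condition modulo $m$ that, when solvable, forces $Q$ to represent $m$. Concretely, Mordell's idea for three squares is: since $-1$ is a sum of two squares mod $m$ when $m$ is admissible, one can find a ternary form of determinant dividing a power of $2$ that is equivalent to $x^2+y^2+z^2$ and visibly represents $m$; the generalization here replaces the shortest-vector/determinant input by a direct construction. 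For each of the six forms I would produce the analogous local solvability input — namely that the genus of $Q$ represents $m$, checked form-by-form via Hensel's lemma at the bad prime and triviality at all other primes — and then upgrade genus representation to representation by $Q$ itself using the fact (which must be part of the paper's toolkit) that each of these six forms is \emph{alone in its genus}. That class-number-one fact is what makes the six Ramanujan--Dickson ternaries tractable by elementary means.

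The concrete steps, in order, would be: (1) for each form, record the local representation conditions at $2$, $3$, $5$ and $\infty$, obtaining exactly the complement of the excluded set; (2) invoke (or re-derive in Mordell's style) that each form is the unique class in its genus, so local representation everywhere implies global representation; (3) handle the descent through squares of the bad prime to cover the $4^k$, $9^k$, $25^k$ factors; (4) assemble (1)--(3) into the stated iff for each of (a)--(f). The Mordell-style flavor enters in step (2): rather than citing Siegel--Weil or strong approximation, one exhibits, for admissible $m$, an explicit unimodular (or bad-prime-power-index) change of variables carrying $Q$ to a form in which $m$ appears as a coefficient or a diagonal entry, exactly as Mordell did for $x^2+y^2+z^2$ but now without needing Gauss's bound on the shortest vector.

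The main obstacle I expect is step (2) carried out in Mordell's elementary style for the non-diagonal-behaving cases, especially (d) $x^2+2y^2+3z^2$ and (f) $x^2+2y^2+5z^2$, where the bad prime ($2$, resp.\ $5$) interacts with the non-unit coefficients and the excluded residues take the two-sided form $25\ell \pm 10$. There one must be careful that the explicit change of variables producing $m$ as a coefficient has the right index (a power of the bad prime only), and that the local analysis at $2$ for forms with a coefficient $2$ genuinely yields the $16\ell + 10$ versus $8\ell + 7$ type distinctions; getting these $2$-adic densities exactly right, and matching them to the clean statements in the theorem, is the delicate bookkeeping. The squarefree-kernel reduction and the all-other-primes local triviality are, by contrast, routine.
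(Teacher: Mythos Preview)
Your proposal is correct at the level of genus theory --- each of the six forms has class number one, so local representability (which your step (1) would verify) does imply global representability, and the $p$-adic descent handles the prime-power factors. But this is precisely the Jones (1931) route the paper mentions and deliberately does \emph{not} take; it is not a Mordell-style argument.

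What the paper actually does is different from your description of step (2), and the difference is the whole point. One does not start with $Q$ and change variables. Instead, for admissible $m$ one \emph{builds} an arbitrary positive ternary form $f$ of the correct determinant $D$ that represents $m$ by construction, via the identity
\[
m\,f(x,y,z) = (Ax+By+mz)^2 + (ax^2 + 2hxy + by^2), \qquad ab-h^2 = Dm,
\]
with $A^2+a \equiv B^2+b \equiv 2AB+2h \equiv 0 \pmod m$. The existence of integers $A,B,a,b,h$ with these properties is secured by setting $B\equiv 0$, $b\equiv h\equiv 0 \pmod{Dm}$, and then using Dirichlet on primes in progressions together with quadratic reciprocity to choose $a$ (a prime, or small multiple of one) so that simultaneously $-a$ is a square mod every prime dividing $m$ and $-Dm$ is a square mod $a$. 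This much is Mordell. The paper's new idea, replacing Gauss's shortest-vector bound, is to impose one further congruence on $a$ and $A$ so that $f(1,0,0)=\tfrac{A^2+a}{m}$ lands in a residue class (e.g.\ $\equiv 5\pmod 8$, or $\equiv 3\pmod 9$, or $\equiv 6\pmod{16}$) that is \emph{not} represented by any of the other forms of determinant $D$ (these being finitely many, read off from the Brandt--Intrau tables). That forces $f\cong Q$, hence $Q$ represents $m$.

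So your ``explicit change of variables carrying $Q$ to a form with $m$ as a coefficient'' has the arrow reversed, and your plan is missing the mechanism that isolates $Q$ among all forms of its determinant without appealing to genus theory: namely, engineering $f(1,0,0)$ to hit a value the competing forms exclude. The case analysis you anticipate in (d) and (f) is real, but it is bookkeeping on the choice of $a\pmod{N}$ and $A\pmod{N}$ for suitable $N$, not on local densities of $Q$.
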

The altered Mordell-style applies to more than just these cases. One could ask, for example, if the method applies to all integer matrix ternaries of specified determinant. We answer in the affirmative for determinants $5$ and $6$, providing in the process results which previously did not appear in the literature:
\begin{theorem}\label{D5}
\begin{itemize}
\item[(a)] A positive integer $m$ is represented by $x^2+y^2+5z^2$ if and only if $m \neq 4^k(8 \ell +3)$.
\item[(b)] A positive integer $m$ is represented by $x^2+2y^2+2yz+3z^2$ if and only if $m \neq 25^k (25 \ell \pm 5)$.
\end{itemize}
\end{theorem}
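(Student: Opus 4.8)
The plan is to prove both parts by the same Mordell-style device used for Theorem~\ref{RD}: for each form, write down an explicit algebraic identity expressing a fixed scalar multiple of it as a form whose representation behaviour is already known --- a sum of three squares in part (a), the determinant-$10$ ternary of Theorem~\ref{RD}(f) in part (b) --- and then transport the known representation criterion back through the identity. In particular no reduction theory or shortest-vector estimate enters, which is the whole point of the generalized method.

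For (a) I would start from the identity
\[ 5\,(x^2+y^2+5z^2) \;=\; (2x+y)^2 + (x-2y)^2 + (5z)^2, \]
and, more generally, $5\,(a^2+b^2+c^2) = (2a+b)^2+(a-2b)^2+5c^2$. The first identity gives the easy implication: if $m=x^2+y^2+5z^2$ then $5m$ is a sum of three squares. For the converse I would take a representation $5m=u^2+v^2+w^2$; inspecting squares modulo $5$ shows at least one of $u,v,w$ is divisible by $5$, so after relabelling $w=5z$ and then, if necessary, replacing $u$ by $-u$, one arranges $5\mid 2u+v$ (hence also $5\mid u-2v$), and $x=(2u+v)/5$, $y=(u-2v)/5$ give $x^2+y^2 = (u^2+v^2)/5 = m-5z^2$. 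The only degenerate case is $5\mid u$ and $5\mid v$, which forces $5\mid m$ and makes $m/5=(u/5)^2+(v/5)^2+z^2$ a sum of three squares, whence $m=5(m/5)$ is represented by the second identity. Thus $x^2+y^2+5z^2$ represents $m$ iff $5m$ is a sum of three squares, i.e. (by the three squares theorem) iff $5m\neq 4^k(8\ell+7)$; since $5$ is odd and $5^{-1}\equiv 5\pmod 8$, this is precisely $m\neq 4^k(8\ell+3)$, and the same chain of equivalences also handles the ``only if'' direction.

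For (b) I would use
\[ 2\,(x^2+2y^2+2yz+3z^2) \;=\; (2y+z)^2 + 2x^2 + 5z^2, \]
which presents twice the form as $X^2+2Y^2+5Z^2$ via $(X,Y,Z)=(2y+z,\,x,\,z)$ --- exactly the ternary of Theorem~\ref{RD}(f). The inverse substitution $y=(X-Z)/2$, $x=Y$, $z=Z$ is integral precisely when $X\equiv Z\pmod 2$, and that congruence is automatic whenever $X^2+2Y^2+5Z^2$ is even (then $X^2+Z^2$ is even). Hence $x^2+2y^2+2yz+3z^2$ represents $m$ iff $X^2+2Y^2+5Z^2$ represents $2m$, which by Theorem~\ref{RD}(f) happens iff $2m\neq 25^k(25\ell\pm10)$; a short computation with $5$-adic valuations (note $v_5(2m)=v_5(m)$ and $2t\equiv\pm2\pmod 5\iff t\equiv\pm1\pmod 5$ with corresponding signs) rewrites this as $m\neq 25^k(25\ell\pm5)$, finishing (b) in both directions at once.

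The step I expect to be the real obstacle is the converse in part (a): showing that an arbitrary representation of $5m$ as a sum of three squares can always be massaged --- by reordering and a single sign change, plus the side analysis when $5\mid m$ --- into one compatible with the identity. Everything else reduces to elementary modular bookkeeping, and part (b) comes out essentially for free once Theorem~\ref{RD}(f) is available, since there the analogue of the massaging step is just the automatic parity condition $X\equiv Z\pmod 2$.
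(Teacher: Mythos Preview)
Your argument is correct, and it is genuinely different from the route taken in the paper. The paper proves both parts of Theorem~\ref{D5} \emph{ab initio} with the Mordell construction at determinant $5$: it writes $mf(x,y,z)=(Ax+By+mz)^2+(ax^2+2hxy+by^2)$ with $ab-h^2=5m$, and then runs an elaborate case analysis (using Dirichlet and quadratic reciprocity, with moduli such as $200$, $400$, $1000$) to manufacture $A,a$ so that $\tfrac{A^2+a}{m}$ lands in a residue class that pins $f$ to the desired one of the two determinant-$5$ forms. Your proof instead \emph{imports} results already established: the identity $5(x^2+y^2+5z^2)=(2x+y)^2+(x-2y)^2+(5z)^2$ together with the factorisation $(2u+v)(v-2u)\equiv u^2+v^2\pmod 5$ reduces (a) cleanly to the three squares theorem, and the identity $2(x^2+2y^2+2yz+3z^2)=(2y+z)^2+2x^2+5z^2$ with the automatic parity $X\equiv Z\pmod 2$ reduces (b) to Theorem~\ref{RD}(f). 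What the paper's approach buys is self-containment and a uniform template that does not depend on discovering a fortunate identity for each form; what your approach buys is brevity and the complete avoidance of the Dirichlet/reciprocity case analysis for determinant $5$, at the cost of relying on the three squares theorem for (a) and on Theorem~\ref{RD}(f) for (b). Both are valid proofs of Theorem~\ref{D5}; yours is the more economical once those earlier results are on the table.
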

\begin{theorem}\label{D6}
 A positive integer $m$ is represented by $x^2+y^2+6z^2$ if and only if $m \neq 9^k(9 \ell +3)$.
\end{theorem}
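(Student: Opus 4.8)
The plan is to argue both directions essentially at the prime $3$: necessity by an explicit descent, and sufficiency by the generalized Mordell machine of this paper. \textbf{Necessity.} I would first reduce to the case $9 \nmid m$: if $9 \mid m$ and $m = x^2 + y^2 + 6z^2$, then reducing mod $3$ gives $x^2 + y^2 \equiv 0$, and since $-1$ is a non-residue mod $3$ this forces $3 \mid x$ and $3 \mid y$; writing $x = 3a$, $y = 3b$ then gives $m = 9a^2 + 9b^2 + 6z^2$, so $9 \mid 6z^2$, hence $3 \mid z$, and $m/9 = a^2 + b^2 + 6(z/3)^2$. Since, for $9 \mid m$, the value $m$ is of the excluded shape iff $m/9$ is, it suffices to rule out $m \equiv 3 \pmod 9$. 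There the same congruence forces $3 \mid x$ and $3 \mid y$, so $m = 9a^2 + 9b^2 + 6z^2$ and hence $6z^2 \equiv 3 \pmod 9$, i.e. $z^2 \equiv 2 \pmod 3$, which is impossible.

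\textbf{Sufficiency.} Assume $m \ne 9^k(9\ell+3)$. The first step is a local check that $f(x,y,z) = x^2 + y^2 + 6z^2$ represents $m$ over every completion of $\mathbb Q$: over $\mathbb R$ trivially since $f$ is positive definite; over $\mathbb Q_p$ with $p \nmid 6$ because $f \bmod p$ is a nondegenerate (hence universal) ternary form over $\mathbb F_p$ and the solution lifts by Hensel; over $\mathbb Q_2$ because a short computation shows $-6$ is a norm from $\mathbb Q_2(\sqrt{-1})$, so $f$ is isotropic and therefore universal there; and over $\mathbb Q_3$ exactly when $m$ is \emph{not} of the form $3^{2k+1}u$ with $u$ a $3$-adic unit $\equiv 1 \pmod 3$. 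Indeed, over $\mathbb Q_3$ the binary form $x^2+y^2$ is anisotropic with value set $\{a : v_3(a) \text{ even}\}$, whereas $v_3(6z^2)$ is always odd; tracking the $3$-adic valuation of $m - 6z^2$ shows it can be made even precisely when $m \ne 9^k(9\ell+3)$. Hence $f = m$ is solvable over every $\mathbb Q_p$ and over $\mathbb R$, so by Hasse--Minkowski it is solvable over $\mathbb Q$.

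The heart of the proof, and the expected main obstacle, is the Mordell-style descent from this rational representation to an integral one. The classical Davenport--Cassels reflection does not apply here, since $f(\tfrac12,\tfrac12,\tfrac12) = 2 > 1$, so rounding a rational solution to its nearest integer point need not shrink the denominator; disposing of exactly such determinant-size hypotheses is the point of this paper's generalization. I would take a solution of $f = m$ in lowest terms $\mathbf p/q$, run the reflection descent, and observe that it can only stall when the fractional-part vector $(\{x\},\{y\},\{z\})$ already has $f$-value $\ge 1$, which severely constrains $q$ and the residues $\mathbf p \bmod q$. Those residual configurations I would then clear directly --- for instance a half-integral solution with $m$ odd forces $p_1^2 + p_2^2 \equiv 6 \pmod 8$, which is impossible --- or, uniformly, by checking that $f$ together with the relevant $m$ satisfies the hypotheses of this paper's general descent lemma. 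The real work is showing that for $x^2 + y^2 + 6z^2$ every such ``bad rounding'' configuration either cannot occur for an allowed $m$ or can be repaired by a subtler choice of nearby lattice point, and that the exceptional set this produces is exactly $\{9^k(9\ell+3)\}$; by comparison, the necessity direction and the local computations above are routine.
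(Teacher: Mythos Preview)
Your necessity argument is correct and matches the paper's lemmas. The sufficiency argument, however, is built on a misreading of what ``Mordell-style'' means here, and the resulting plan has a real gap.

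You propose Hasse--Minkowski followed by a Davenport--Cassels reflection to pass from a rational solution to an integral one, and you acknowledge that the reflection stalls because $f(\tfrac12,\tfrac12,\tfrac12)>1$. At that point you defer to ``this paper's general descent lemma.'' There is no such lemma. The paper never produces a rational solution and never runs any reflection or rounding argument. Your case analysis of the ``bad rounding'' configurations is only sketched, and the one example you work out (half-integral, $m$ odd) does not come close to covering all denominators $q$; without a systematic tool there is no reason to expect the residual cases to collapse to exactly $\{9^k(9\ell+3)\}$.

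What the paper actually does is constructive and never touches $\mathbb Q$-solutions. For $m$ with $9\nmid m$ and $m\not\equiv 3\pmod 9$, one writes down an \emph{integral} ternary form $f$ of determinant $6$ via
\[
mf(x,y,z)=(Ax+By+mz)^2+(ax^2+2hxy+by^2),\qquad ab-h^2=6m,
\]
so that $f$ represents $m$ by construction. The integers $B,b,h$ are set to $0$ modulo suitable moduli; the existence of $A$ with $A^2+a\equiv 0\pmod m$ and of $h$ with $h^2\equiv -6m\pmod a$ is arranged by choosing $a$ in a specific residue class and invoking Dirichlet on primes in progressions plus quadratic reciprocity. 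There are only two classes of determinant $6$, namely $x^2+y^2+6z^2$ and $x^2+2y^2+3z^2$; the paper forces $\tfrac{A^2+a}{m}\equiv 10\pmod{16}$, a value the second form cannot represent, so $f$ must be equivalent to $x^2+y^2+6z^2$. The case $4\mid m$ is handled by the trivial observation that representability of $m_1$ implies that of $4^km_1$. None of this is a descent from $\mathbb Q$ to $\mathbb Z$; it is a direct construction of an integral form followed by an identification step among finitely many genera.
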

Notes: 
\begin{itemize}
\item The other ternary of determinant $6$ is one of the Ramanujan-Dickson forms, so we do not list it again.
\item In 1931, Jones proved that the forms of a given genus collectively represent all positive integers not ruled out by certain congruence conditions \cite{Jones}. Since all of the forms we consider are alone in their genus, these forms represent exactly the integers they represent locally. The class number one requirement is a somewhat strict condition on a quadratic form and prevents large degrees of generalization. For instance, it has been well documented (see \cite{Kaplansky}, \cite{Kelley}) that $x^2+y^2+7z^2$ is not class number one and--more crucially--the integers represented by that form are still not understood.
\end{itemize}
The remainder of the paper is organized as follows. After a brief background section, we proceed to the proofs related to the Ramanujan-Dickson ternaries. Then we end with the proofs of representation for the forms of determinant $5$ and $6$.\\
\\
\textbf{Acknowledgements:} This research was supported by the National Science Foundation (DMS-1461189). We additionally thank Jeremy Rouse for providing helpful discussions and suggestions and Paul Pollack for his help in acquiring the necessary background literature.
\section{Background}
We provide a brief introduction to the definition and notation commonly seen in the study of quadratic forms.\\
\\
Let $n \in \mathbb N$. An \textbf{\textit{n}-ary integral quadratic form} is a homogeneous integral polynomial of degree two of the form $$Q(x) = Q(x_1,...,x_n) = \displaystyle\sum_{1 \leq i \leq j \leq n}a_{ij}x_ix_j \in \mathbb Z[x_1,...,x_n].$$
An equivalent way to represent quadratic forms is via symmetric matrices in $\mathcal M_n(\mathbb Q)$; that is, to each $n$-ary integral quadratic form there exists a unique symmetric matrix $A_Q \in \mathcal M_n(\mathbb Q)$ such that $$Q(x) = x^{t} A_Q x.$$
Under such a representation, all diagonal terms are integers while all off-diagonal entries are allowed to be (at worst) half integers. When $A_Q \in \mathcal M_n(\mathbb Z)$ (equivalently, when all cross-terms of $Q$ are even) we say that $Q$ is an \textbf{integer-matrix} form (equivalently, that $Q$ is \textbf{classically integral}).\\
\\
Given two $n$-ary quadratic forms $Q_1$ and $Q_2$ with respective matrices $A_{Q_1}, A_{Q_2}$ we say that $Q_1$ and $Q_2$ are \textbf{equivalent (over $\mathbb Z$)} if and only if there exists a matrix $M \in GL_n(\mathbb Z)$ such that $A_{Q_1} = MA_{Q_2}M^t$. \\
\\
Let $Q(x)$ be an $n$-ary integral quadratic form and let $m \in \mathbb Z$. We say that $Q$ \textbf{represents} $m$ if there exists $v \in \mathbb Z^n$ such that $Q(v)=m$. When $Q(v) > 0$ (resp., $\leq 0$) for all $\vec{0} \neq v \in \mathbb Z^n$, we say that $Q$ is \textbf{positive definite} (resp. \textbf{negative definite}).\\
\\
Henceforth, by ``form'' we mean ``positive definite ternary classically integral quadratic form.''\\
\\
We now outline the method of Mordell for proving representability by the sum of three squares. Let $m \in \mathbb N$ and suppose $f(x,y,z)$ is a classically integral form of determinant $1$ which represents $m$. Write $$mf(x,y,z)=(Ax+By+mz)^2+(ax^2+2hxy+by^2),$$ where $A,B,a,h,b \in \mathbb Z$ with $$A^2+a \equiv B^2 + b \equiv 2AB +2h \equiv 0 \pmod{m},$$ and where necessarily $ab-h^2 = m$. A geometry of numbers result of Gauss \cite[II.3.4]{Cassels} guarantees that $f$ represents a positive integer $E \leq \sqrt[3]{2D}$. As in the case of the sum of three squares $D=1$, this guarantees that $f$ represents $1$. As $f$ is presumed to be classically integral, this means $f$ is equivalent to a ternary of the form $X^2 \perp c_1Y^2+c_2YZ+c_3Z^2$ where the determinant of the positive definite binary in $Y$ and $Z$ must be one. The only choice up to equivalence is $c_1=c_3=1$ and $c_2=0$. Thus $f$ is equivalent to the sum of three squares. The remainder of Mordell's proof involves verifying that $A,B,a,h,b$ exist.\\
\\
The result of Gauss clearly becomes especially difficult to apply as the determinant of the form and subsequently the bound on $E$ increases. Moreover, as the determinant increases more forms will appear as possible candidates. We posit that this is why Mordell only used this result on the sum of three squares. However, this result of Gauss is not necessary. Instead of using a small vector argument to isolate the form of interest $f$ as Mordell did, we instead add conditions on $A,B,h,a$ and $b$ to force $f$ to represent values all other forms of the same determinant except. Since the values of the coefficients of any reduced form are bounded by the determinant of the form we can create a finite list of all classically integral forms of a fixed determinant. We refer to Nebe \cite{Nebe}, who provides a finite list of all forms of a given determinant, up to equivalence. For historical completion, we note that the tables appearing on Nebe's webpage were originally compiled in 1958 by Brandt and Intrau \cite{BI} using a method of bounding coefficients of reduced quadratic forms (see \cite{Cox} for such an argument in the binary case). \\
\\
Our general method of proof is as follows. We begin with the same construction of $mf(x,y,z)$ as Mordell. Towards generalization we first require that $f(x,y,z)$ has determinant $D$, which forces $ab-h^2 = Dm$. Like Mordell, we must have $A^2+a\equiv B^2+b\equiv 2AB+2h \equiv 0 \pmod{m}$. We set $B\equiv{0} \pmod{m}$ and $b\equiv{h}\equiv0 \pmod{Dm}$, noting that additionally $b=\tfrac{Dm+h^2}{a}$. This satisfies all requirements except $A^2+a\equiv{0} \pmod{m}$ and $ab-h^2 = Dm$. To show these in fact hold we use Dirichlet's theorem of primes in an arithmetic progression to construct $a$ such that $a\nmid m$ and $\left(\frac{-a}{p}\right) = 1$ for all odd primes $p \vert m$. This ensures that $A^2+a \equiv 0 \pmod{m}$ has a solution. Furthermore, since $-Dm\equiv{h^2} \pmod{a}$ we must place sufficient restrictions on $a$ such that $\left(\frac{-Dm}{a}\right) = 1$. Last, we choose $A$ and $a$ so that $\frac{A^2+a}{m}$ is an integer not represented by all but one form of determinant $D$. This allows us to isolate a particular $f(x,y,z)$. 

\section{The Ramanujan-Dickson Ternaries}
In this section, we provide proofs for the six Ramanujan-Dickson ternaries not considered by Mordell.
\subsection{The form {$x^2+y^2+2z^2$}}
\begin{lemma}
Let $m \equiv 14 \pmod{16}$. Then $m$ is not represented by $x^2+y^2+2z^2$.
\end{lemma}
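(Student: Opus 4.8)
The plan is to prove this non-representability statement by a straightforward congruence (local obstruction) argument, since $m \equiv 14 \pmod{16}$ forces an incompatibility among the possible residues of $x^2+y^2+2z^2$ modulo $16$. First I would reduce the equation $x^2+y^2+2z^2 = m$ modulo $8$: since $m \equiv 14 \equiv 6 \pmod 8$, I need $x^2+y^2+2z^2 \equiv 6 \pmod 8$. Recalling that squares are $\equiv 0,1,4 \pmod 8$ and $2z^2 \equiv 0$ or $2 \pmod 8$ (in fact $2z^2 \in \{0,2\} \pmod 8$ when $z$ even gives $0$ or $8\equiv 0$, and $z$ odd gives $2$), I would enumerate which combinations of $x^2+y^2 \pmod 8$ together with $2z^2 \pmod 8$ can sum to $6$. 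This already constrains parities: one checks that $x$ and $y$ must both be even and $z$ must be odd (or some similarly rigid configuration), because $x^2+y^2 \pmod 8$ lies in $\{0,1,2,4,5\}$ and we need it to be $4$ or $6$.

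Next I would push the analysis to modulo $16$ using the parity information just extracted. Writing $x = 2x'$, $y = 2y'$ with $z$ odd, the equation becomes $4x'^2 + 4y'^2 + 2z^2 = m \equiv 14 \pmod{16}$, hence $2x'^2 + 2y'^2 + z^2 \equiv 7 \pmod 8$. Since $z$ is odd, $z^2 \equiv 1 \pmod 8$, so I need $2x'^2 + 2y'^2 \equiv 6 \pmod 8$, i.e. $x'^2 + y'^2 \equiv 3 \pmod 4$. But sums of two squares are never $\equiv 3 \pmod 4$ (squares mod $4$ are $0$ or $1$). This contradiction establishes that no integer solution exists.

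The main obstacle is not conceptual difficulty but making sure the initial modulo-$8$ case analysis is exhaustive and correctly pins down the parities of $x$, $y$, $z$; a careless enumeration could miss a case where, say, exactly one of $x,y$ is odd, and I would want to verify cleanly that every such case fails modulo $8$ before descending to modulo $16$. I would present this as a short case check (a small table of $x^2+y^2 \bmod 8$ against $2z^2 \bmod 8$) followed by the two-line descent argument above. If preferred, the entire argument can instead be phrased as a single computation modulo $16$: enumerate $x^2 \bmod 16 \in \{0,1,4,9\}$, $y^2 \bmod 16 \in \{0,1,4,9\}$, $2z^2 \bmod 16 \in \{0,2,8\}$, and check directly that $14$ never arises as a sum — but the descent phrasing is cleaner and generalizes to the $4^k(16\ell+14)$ statement needed for Theorem~\ref{RD}(a).
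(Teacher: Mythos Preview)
Your argument is correct: the modulo-$8$ reduction forces $x,y$ even and $z$ odd, and then the substitution $x=2x'$, $y=2y'$ leads to $x'^2+y'^2\equiv 3\pmod 4$, which is impossible. The paper itself does not give a proof here (it simply declares the lemma ``a simple exercise left to the reader''), and your congruence check is exactly the routine verification the authors had in mind.
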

\begin{proof}
This is a simple exercise left to the reader.
\end{proof}
\begin{lemma}
Let $m$ be an even integer. $m$ is not represented by $x^2+y^2+2z^2$ if and only if $4m$ is not represented by $x^2+y^2+2z^2$.
\end{lemma}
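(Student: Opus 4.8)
The plan is to prove the two implications of the biconditional separately, with the forward direction (descent: if $m$ is not represented then $4m$ is not represented) being straightforward and the reverse direction (ascent: if $4m$ is not represented then $m$ is not represented) being the substantive one. For the forward direction, I would argue by contrapositive: suppose $4m$ is represented, say $x^2+y^2+2z^2 = 4m$ with $m$ even. Reducing modulo appropriate powers of $2$ forces parity constraints on $x,y,z$; since $4m \equiv 0 \pmod 4$, one checks that $x,y$ must have the same parity and $z$ must be even, and then a case analysis (both $x,y$ even, or both odd) shows that one can divide through to produce a representation of $m$. The only slightly delicate subcase is when $x,y$ are both odd, where $x^2+y^2 \equiv 2 \pmod 8$ forces $2z^2 \equiv 2 \pmod 4$, i.e. $z$ odd, giving $x^2+y^2+2z^2 \equiv 4 \pmod 8$, which contradicts $4m \equiv 0 \pmod 8$ (as $m$ is even); so in fact $x,y$ are both even and $z$ is even, and dividing the identity $x^2+y^2+2z^2=4m$ through by $4$ exhibits $m = (x/2)^2 + (y/2)^2 + 2(z/2)^2$.

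For the reverse direction I would again use the contrapositive: assume $m$ (even) is represented, $x^2+y^2+2z^2 = m$, and produce a representation of $4m$. The naive move $4m = (2x)^2+(2y)^2+2(2z)^2$ works trivially, so this direction is actually immediate — the real content, and the reason the lemma is stated as a biconditional rather than as a one-way reduction, is that it lets us repeatedly strip factors of $4$ from an even non-represented $m$. So the key steps, in order, are: (1) observe the trivial scaling $x^2+y^2+2z^2 = m \implies (2x)^2+(2y)^2+2(2z)^2 = 4m$, giving one implication for free; (2) for the other implication, take a representation of $4m$ and run the parity analysis above to force $x,y,z$ all even; (3) divide by $4$ to descend to a representation of $m$.

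The main obstacle is entirely contained in step (2): one must rule out the ``bad'' parity configuration in which $x,y$ are odd. The point is precisely that $m$ is assumed \emph{even}, so $4m \equiv 0 \pmod 8$, and $x^2+y^2+2z^2$ with $x,y$ odd is $\equiv 2 + 2z^2 \pmod 8$, which is $2$ or $4 \pmod 8$ and never $0$; hence that configuration cannot occur and all of $x,y,z$ are forced even. (Note that the evenness hypothesis on $m$ is genuinely needed here: for odd $m$ no such clean descent holds, which is why the surrounding argument treats even $m$ separately via this lemma.) Once that parity fact is in hand, the division by $4$ is automatic and the lemma follows.
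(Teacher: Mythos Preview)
Your proof is correct and follows exactly the paper's approach: the trivial direction is the scaling $(x,y,z)\mapsto(2x,2y,2z)$, and the substantive direction reduces $x^2+y^2+2z^2\equiv 0\pmod 8$ (using that $m$ is even) to force $x,y,z$ all even, then divides through by $4$. Two cosmetic slips worth fixing: your opening sentence has the ``straightforward'' and ``substantive'' labels swapped (you correct yourself later), and the claim that $z$ must be even from the $\pmod 4$ reduction alone is premature---it only follows after you exclude the $x,y$ both odd case---but the actual argument you give is sound.
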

\begin{proof}
One direction is trivial. If $4m$ is represented, where $m$ is even, then there exist integers $x,y,z$ with $$x^2+y^2+2z^2 \equiv 0 \pmod{8}.$$ This forces $x \equiv y \equiv z \equiv 0 \pmod{2}$. Writing $x=2x_1, y=2y_1,z=2z_1$, we then see $$x_1^2+y_1^2+2z_1^2 =m.$$
\end{proof}
\begin{lemma}
If $m = 4^k (16 \ell + 14)$ for $k, \ell \in \mathbb{Z}$, then $m$ is not represented by $x^2+y^2+2z^2$.
\end{lemma}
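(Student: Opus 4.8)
The plan is to induct on $k$, using the two preceding lemmas of this subsection as the base case and the inductive engine, respectively. The point is that the exponent $4^k$ can be stripped off two powers of $2$ at a time by repeated application of the doubling lemma, until we are reduced to the residue class $14 \pmod{16}$, which the first lemma already handles.

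For the base case $k=0$ we have $m = 16\ell + 14 \equiv 14 \pmod{16}$, so the first lemma of this subsection immediately gives that $m$ is not represented by $x^2+y^2+2z^2$. For the inductive step, suppose $k \geq 1$ and that the statement holds with $k-1$ in place of $k$. Write $m = 4m'$ where $m' = 4^{k-1}(16\ell + 14)$. The induction hypothesis says $m'$ is not represented by $x^2+y^2+2z^2$, and the second lemma (the doubling lemma) then says that $4m' = m$ is not represented by $x^2+y^2+2z^2$, completing the induction.

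There is essentially no obstacle here; the argument is routine. The only point that needs a moment's attention is the hypothesis of the doubling lemma, which requires the integer being quadrupled to be \emph{even}: one must check that $m' = 4^{k-1}(16\ell+14)$ is even at each stage of the induction. This is immediate, since $16\ell + 14 \equiv 0 \pmod 2$ and multiplying by the nonnegative power of $4$, $4^{k-1}$, preserves evenness. Hence the doubling lemma applies at every step and the conclusion follows.
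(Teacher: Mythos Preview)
Your proof is correct and is essentially the same as the paper's, which simply states that the result follows immediately from the previous two lemmas; you have just made the induction on $k$ explicit and verified the evenness hypothesis needed to invoke the doubling lemma at each step.
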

\begin{proof}
This follows immediately from the previous two lemmas.
\end{proof}
We now show that $x^2+y^2+2z^2$ represents all integers $m \neq 4^k(16\ell+14)$ for $k,\ell \in \mathbb Z^{\geq 0}$.
Let $m \in \mathbb{Z}$, where $4 \nmid m$ and with $m \not\equiv 14 \pmod{16}$. Consider the ternary quadratic form $f(x,y,z)$ of determinant $D=2$ given by 
$$mf(x,y,z) = (Ax+By+mz)^2 + (ax^2+2hxy+by^2),$$
where necessarily $ab-h^2=2m$. By construction $f$ represents $m$.  As there is only one form of determinant $2$, namely $x^2+y^2+2z^2$, so long as $A,B,a,h,$ and $b$ exist with $$A^2+a \equiv B^2+b \equiv 2AB+2h \equiv 0 \pmod{m},$$ we have shown that $x^2+y^2+2z^2$ represents $m$.\\
\\
Take $b \equiv h \equiv 0 \pmod{2m}$ and $B \equiv 0 \pmod{m}$. For $a$ and $A$, we proceed by cases on $m$.
\begin{itemize}
\item[(Case 1)] $m \equiv 1,5,9,13 \pmod{16}$. We take $a$ to be a positive prime with $a \equiv 1 \pmod{8}$ and $(\tfrac{-a}{p})=1$ for all odd primes $p \vert m$. For such a prime $a$:
$$
1  =  \prod_{p \vert m} \left(\dfrac{-a}{p}\right)
   =  \prod_{p \vert m} \left(\dfrac{p}{a}\right)
   =  \left(\dfrac{-2m}{a}\right).
$$
\item[(Case 2)] $m \equiv 3,7,11,15 \pmod{16}$. Let $a \equiv 5 \pmod{8}$, and $(\tfrac{-a}{p})=1$ for all odd primes $p \vert m$. Then proceed in a fashion identical to the previous case. 
\item[(Case 3)] $m \equiv 2 \pmod{16}$. Here $m = 2m_1$ for $m_1 \equiv 1, 9 \pmod{16}$. We choose $a \equiv 1 \pmod{8}$ to be a prime satisfying $(\tfrac{-a}{p})=1$ for all odd primes $p \vert m_1$. Then:
$$
1  =  \prod_{p \vert m_1} \left(\dfrac{-a}{p}\right)
   =  \prod_{p \vert m_1} \left(\dfrac{p}{a}\right)
   =  \left(\dfrac{m_1}{a}\right) 
   =  \left(\dfrac{-2m}{a}\right).
$$
\item[(Case 4)] $m \equiv 10 \pmod{16}$. Here $m=2m_1$ for $m_1 \equiv 5, 13 \pmod{16}$. We choose $a \equiv 1 \pmod{8}$ to be a prime satisfying $(\tfrac{-a}{p})=1$ for all odd primes $p \vert m_1$. We then proceed similarly to the previous case.
\item[(Case 5)] $m \equiv 6 \pmod{16}$. Here we have $m=2m_1$ for $m_1 \equiv 3, 11 \pmod{16}$. We choose $a = 2a_1$ where $a_1 \equiv 1 \pmod{8}$ is a prime satisfying $(\tfrac{-a}{p}) =1$ for all odd $p \vert m_1$. This choice of $a$ satisfies:
$$
1  =  \prod_{p|m_1} \left(\dfrac{-a}{p}\right)
   =  \left(\dfrac{-2}{m_1}\right) \prod_{p|m_1} \left(\dfrac{a_1}{p}\right)
   =  \left(\dfrac{m_1}{a_1}\right) 
   =  \left(\dfrac{-2m}{a_1}\right).
$$
\end{itemize}
This completes the proof of Theorem \ref{RD}(a).

\subsection{The form {$x^2+y^2+3z^2$}}
\textbf{\\}
We begin by noting there are two determinant 3 forms: $Q_1:x^2+y^2+3z^2$ and $Q_2:x^2+2y^2+2yz+2z^2$.
\begin{lemma}\label{L4}
If $m$ is not represented by $Q_2$, then $4m$ is not represented by $Q_2$.
\end{lemma}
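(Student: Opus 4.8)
The plan is to mimic the descent argument already used for $x^2+y^2+2z^2$ (Lemma 2 of that subsection), adapting it to the binary form $Q_2 = x^2+2y^2+2yz+2z^2$ and the modulus appropriate to its determinant. Suppose $4m$ is represented by $Q_2$, say $x^2+2y^2+2yz+2z^2 = 4m$ for integers $x,y,z$. The goal is to show that then $m$ itself is represented, which contradicts the hypothesis; equivalently, I want to show the only representations of a multiple of $4$ by $Q_2$ come from scaling a representation of a quarter of that value.

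First I would reduce the equation modulo a suitable power of $2$. Since $Q_2(x,y,z) = x^2 + 2(y^2+yz+z^2)$ and $y^2+yz+z^2 \equiv 0,1 \pmod{?}$ depending on parities, I would examine the parities of $x,y,z$ forced by $x^2 + 2(y^2+yz+z^2) \equiv 0 \pmod 4$. This forces $x$ even, say $x = 2x_1$, so $4x_1^2 + 2(y^2+yz+z^2) = 4m$, i.e. $2x_1^2 + (y^2+yz+z^2) = 2m$; hence $y^2+yz+z^2$ is even, which (checking the four parity cases for $(y,z)$) forces $y \equiv z \pmod 2$. If both are even, write $y=2y_1,z=2z_1$ and divide through; if both are odd, I would need to show this case is actually impossible modulo a higher power of $2$, or else perform a unimodular change of variables on the binary part $y^2+yz+z^2$ to land back in the "both even" situation. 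The anisotropic binary form $y^2+yz+z^2$ has automorphisms of order $6$, and one of them should convert an "odd-odd" vector into an "even-$*$" vector, completing the descent.

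After handling parities, the remaining bookkeeping is routine: substituting $y = 2y_1$, $z = 2z_1$ into $2x_1^2 + y^2 + yz + z^2 = 2m$ gives $2x_1^2 + 4(y_1^2+y_1z_1+z_1^2) = 2m$, so $x_1$ is even, $x_1 = 2x_2$, and then $4x_2^2 + 2(y_1^2+y_1z_1+z_1^2) = m$... wait, that overshoots; I would instead be careful to divide by exactly the right power, arriving at $Q_2(x',y',z') = m$ for appropriate integers. I expect the main obstacle to be the "odd-odd" case for $(y,z)$: one must either rule it out via a congruence obstruction mod $8$ (or mod $16$) or neutralize it using the nontrivial automorphism of the binary form $y^2+yz+z^2$, and getting that automorphism argument clean — verifying it genuinely changes the relevant parities and commutes correctly with the $x^2$ term (which it does, since it acts only on $y,z$) — is the delicate point. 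Once that is in place, the lemma follows exactly as its counterpart did for $x^2+y^2+2z^2$.
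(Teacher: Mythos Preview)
Your approach is exactly the paper's: assume $Q_2$ represents $4m$, reduce modulo $4$, force $x,y,z$ all even, and divide through. The trouble is a slip in your parity check of $y^2+yz+z^2$. You concluded that $y^2+yz+z^2\equiv 0\pmod 2$ forces only $y\equiv z\pmod 2$, leaving an ``odd--odd'' case to be handled by automorphisms or higher congruences. But when $y$ and $z$ are both odd, $y^2+yz+z^2\equiv 1+1+1\equiv 1\pmod 2$, which is odd. So the four parity cases give values $0,1,1,1\pmod 2$, and evenness of $y^2+yz+z^2$ already forces $y\equiv z\equiv 0\pmod 2$. There is no odd--odd case to worry about, and no need for the order-$6$ automorphism.

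Your bookkeeping at the end also tangled itself unnecessarily. Once $x=2x_1$, $y=2y_1$, $z=2z_1$, the equation $x^2+2(y^2+yz+z^2)=4m$ becomes $4x_1^2+8(y_1^2+y_1z_1+z_1^2)=4m$; dividing by $4$ gives $x_1^2+2(y_1^2+y_1z_1+z_1^2)=m$, which is $Q_2(x_1,y_1,z_1)=m$ on the nose. There is no further descent on $x_1$ needed --- the step where you wrote ``so $x_1$ is even'' came from dividing by $2$ once too early and then trying to divide again.
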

\begin{proof}
Assume $4m$ is represented by $Q_2$, so there exist $x,y,z \in \mathbb{Z}$ such that
$$0 \equiv x^2 + 2(y^2+yz+z^2) \pmod{4}.$$ This forces $x \equiv y \equiv z \equiv 0 \pmod{2}$. Writing $x = 2x_1$, $y = 2y_1$, and $z = 2z_1$ for $x_1,y_1,z_1 \in \mathbb Z$,
\begin{eqnarray*}
m & = & x_1^2 + 2y_1^2 + 2y_1z_1 + 2z_1^2.
\end{eqnarray*}
\end{proof}
\begin{lemma}
If $m=4^k(8\ell+5)$ for $k,\ell \in \mathbb{Z}$, then $m$ is not represented by $Q_2$.
\end{lemma}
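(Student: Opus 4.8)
The plan is to imitate the structure already used for $x^2+y^2+2z^2$ and for Lemma~\ref{L4}: reduce the general power $4^k$ down to the case $k=0$, and then dispose of that case by a congruence computation modulo $8$.

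First I would induct on $k$. For $k=0$ the assertion is that no integer $m\equiv 5\pmod 8$ is represented by $Q_2=x^2+2y^2+2yz+2z^2$. Granting this, Lemma~\ref{L4} promotes the non-representability of $8\ell+5$ to that of $4(8\ell+5)$, then of $4^2(8\ell+5)$, and inductively of every $m=4^k(8\ell+5)$; negative values of $m$ are trivially not represented since $Q_2$ is positive definite. So everything comes down to the base case.

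For the base case, write $Q_2(x,y,z)=x^2+2(y^2+yz+z^2)$ and work modulo $8$. The key observation is a dichotomy for the binary form $y^2+yz+z^2$: if $y$ and $z$ are both even it is divisible by $4$, and in every other case it is odd; in particular it is never $\equiv 2\pmod 4$. Hence $2(y^2+yz+z^2)$ modulo $8$ lies in $\{0,2,6\}$, while $x^2$ modulo $8$ lies in $\{0,1,4\}$, so $Q_2(x,y,z)$ modulo $8$ lies in $\{0,1,2,3,4,6,7\}$ and therefore is never $5$. This establishes the base case and, with the induction above, the lemma.

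I do not expect a real obstacle: the whole argument is elementary, and the only point needing (routine) care is the verification of the dichotomy for $y^2+yz+z^2$ modulo $4$, which one gets immediately by checking the parities of $y$ and $z$. This is the geometry-of-numbers-free analogue of the earlier non-representability lemmas, and the same template will be reused for the remaining forms.
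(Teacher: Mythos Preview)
Your proposal is correct and follows essentially the same route as the paper: reduce to $k=0$ via Lemma~\ref{L4}, then rule out $m\equiv 5\pmod 8$ by a congruence analysis of $Q_2=x^2+2(y^2+yz+z^2)$ modulo $8$. The only cosmetic difference is that the paper first observes $x$ must be odd and then notes that $y^2+yz+z^2\equiv\pm 2\pmod 8$ is impossible, whereas you directly enumerate the residues of $Q_2$ modulo $8$; these are two phrasings of the same parity dichotomy for $y^2+yz+z^2$.
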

\begin{proof}
We can assume $m=8\ell+5$ by the previous lemma. Suppose $m$ were represented by $Q_2$. Then $x$ is necessarily odd and 
\begin{eqnarray*}
y^2+yz+z^2 &\equiv& \pm 2 \pmod{8}.
\end{eqnarray*}
As there is no solution to the above equation $m$ is not represented by $Q_2$.
\end{proof}
\begin{lemma}
If $m$ is not represented by $Q_1$, then $9m$ is not represented by $Q_1$.
\end{lemma}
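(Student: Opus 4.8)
The plan is to establish the contrapositive directly, by a descent modulo $3$ analogous to the factor-of-$4$ descents already carried out for $Q_2$ and for $x^2+y^2+2z^2$. Suppose $9m$ is represented by $Q_1$, say $x^2+y^2+3z^2 = 9m$ with $x,y,z \in \mathbb{Z}$; the goal is to produce a representation of $m$ by $Q_1$.

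First I would reduce the equation modulo $3$. Since $3 \mid 9m$ and $3z^2 \equiv 0 \pmod{3}$, this gives $x^2 + y^2 \equiv 0 \pmod{3}$. The nonzero squares modulo $3$ are all $\equiv 1$, so a sum of two squares can vanish mod $3$ only when both squares vanish; hence $3 \mid x$ and $3 \mid y$. Writing $x = 3x_1$, $y = 3y_1$ and dividing through by $3$ yields $3x_1^2 + 3y_1^2 + z^2 = 3m$, which forces $3 \mid z^2$ and therefore $3 \mid z$. Setting $z = 3z_1$ and dividing by $3$ once more produces $x_1^2 + y_1^2 + 3z_1^2 = m$, so $Q_1$ represents $m$.

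I do not anticipate a genuine obstacle: the only thing to verify is that each reduction step truly forces the asserted divisibility, and this is immediate from the quadratic residues mod $3$. Iterating the lemma shows that if $m$ is not represented by $Q_1$, then neither is $9^k m$ for any $k \geq 0$; combined with the congruence obstruction that $x^2+y^2 \not\equiv 3,6 \pmod{9}$ (so $Q_1$ misses every $m \equiv 6 \pmod{9}$), this shows that every integer of the form $9^k(9\ell+6)$ fails to be represented by $Q_1$, which is the easy direction of Theorem~\ref{RD}(b).
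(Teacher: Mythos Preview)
Your proof is correct and follows the same contrapositive descent as the paper. The only cosmetic difference is that the paper works directly modulo $9$ to conclude $x\equiv y\equiv z\equiv 0\pmod 3$ in one step, whereas you split this into a mod-$3$ reduction forcing $3\mid x,y$ followed by a second divisibility argument for $z$; the content is identical.
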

\begin{proof}
Assume $9m$ is represented by $Q_1$, so there exist $x,y,z \in \mathbb Z$ such that $$0 \equiv x^2+y^2+3z^2 \pmod{9}.$$This implies $x \equiv y \equiv z \equiv 0 \pmod{3}$. Writing $x=3x_1$, $y=3y_1$, and $z=3z_1$ for $x_1,y_1,z_1 \in \mathbb Z$ we have
$$m  =  x_1^2+y_1^2+3z_1^2.$$
\end{proof}
\begin{lemma}
If $m=9^k(9\ell+6)$ for $k,\ell \in \mathbb{Z}$, then $m$ is not represented by $Q_1$.
\end{lemma}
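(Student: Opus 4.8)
The plan is to reduce to the case $k=0$ via the lemma immediately preceding this statement (non-representation of $m$ by $Q_1$ forces non-representation of $9m$ by $Q_1$), and then to settle that base case with a short two-step congruence argument modulo $3$ and then modulo $9$. Concretely, by that lemma together with an induction on $k$, it suffices to prove that every integer $m \equiv 6 \pmod 9$ fails to be represented by $Q_1 = x^2+y^2+3z^2$.

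For the base case I would argue by contradiction: suppose $x^2+y^2+3z^2 = m$ with $m \equiv 6 \pmod 9$. Reducing modulo $3$ gives $x^2+y^2 \equiv 0 \pmod 3$, and since every nonzero square is $\equiv 1 \pmod 3$ this forces $x \equiv y \equiv 0 \pmod 3$. Writing $x = 3x_1$ and $y = 3y_1$ and substituting, the equation becomes $9x_1^2 + 9y_1^2 + 3z^2 = m$, so modulo $9$ we get $3z^2 \equiv 6 \pmod 9$, i.e. $z^2 \equiv 2 \pmod 3$, which is impossible. Hence no $m \equiv 6 \pmod 9$ is represented.

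To finish, I would feed this base case into the preceding lemma: if $9\ell+6$ is not represented by $Q_1$, then neither is $9(9\ell+6)$, then neither is $9^2(9\ell+6)$, and so on, so by induction $9^k(9\ell+6)$ is not represented for any $k \ge 0$ (and the statement for negative exponents, or $k$ ranging over $\mathbb Z$, reduces to a nonnegative exponent after clearing the appropriate power of $9$). I do not anticipate a genuine obstacle here; the computation is elementary and the inductive step is exactly the content of the prior lemma. The only point requiring a little care is the bookkeeping of the exponent $k$, to be sure the induction actually covers every $m$ of the stated shape rather than just those with $\ell$ not divisible by $9$.
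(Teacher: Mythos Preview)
Your argument is correct and follows the same overall structure as the paper: reduce to the base case $m\equiv 6\pmod 9$ and then invoke the preceding lemma inductively. The one difference is that the paper disposes of the base case by appealing to a computer search, whereas you supply a clean two-step congruence argument (mod $3$ to force $3\mid x,y$, then mod $9$ to reach $z^2\equiv 2\pmod 3$); your version is more self-contained and arguably preferable.
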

\begin{proof}
A computer search shows that if $m \equiv 6 \pmod{9}$ then $Q_1$ cannot represent $m$. Applying the previous lemma gives the result.
\end{proof}

We now show that that $x^2+y^2+3z^2$ represents all integers $m\neq 9^k(9\ell+6)$ for $k,\ell \in \mathbb Z^{\geq 0}$. Let $m \in \mathbb{Z}$, where $9 \nmid m$ and with $m \not\equiv 6 \pmod{9}$. Consider a ternary quadratic form $f(x,y,z)$ of determinant $D=3$ given by 
$$mf(x,y,z) = (Ax+By+mz)^2 + (ax^2+2hxy+by^2),$$
where necessarily $ab-h^2=3m$. By construction $f$ represents $m$. We must show that $A,B,a,b$ and $h$ exist with $$A^2+a \equiv B^2 +b \equiv 2AB+2h \equiv 0 \pmod{m}.$$ To satisfy the second and third congruences choose $b \equiv h \equiv 0 \pmod{3m}$ and $B \equiv 0 \pmod{m}$. To show that $f$ is equivalent to $Q_1$ and not $Q_2$, we moreover require that $\tfrac{A^2+a}{m} \equiv 5 \pmod{8}$.\\
To demonstrate the selection of $A$ and $a$, we now proceed by cases on $m$.
\begin{itemize}
\item[(Case 1)]
$m \equiv 1 \pmod{4}$
\begin{itemize}
\item[(Subcase 1)]
Consider $3 \nmid m$. Let $a \equiv 1 \pmod{24}$ be an odd prime with $a \nmid m$ and $(\tfrac{-a}{p})=1$ for all primes odd primes $p  \vert m$.  Then
$$
1  =  \prod_{p|m} \left(\dfrac{-a}{p}\right)
   =  \prod_{p|m} \left(\dfrac{p}{a}\right)
   =  \left(\dfrac{-3}{a}\right) \prod_{p|m} \left(\dfrac{p}{a}\right)
   =  \left(\dfrac{-3m}{a}\right).
$$
Note we can also ensure $\tfrac{A^2+a}{m} \equiv 5 \pmod{8}$. If $m \equiv 1 \pmod{8}$, then $A \equiv \pm2 \pmod{8}$. Else, choose $A^2 \equiv 0 \pmod{8}$.
\item[(Subcase 2)]
$m \equiv 21 \pmod{36}$. Let $m:=3m_1\equiv 7 \pmod{12}$ and let $a := 3a_1$ where $a_1 \equiv 7 \pmod{8}$ is an odd prime with $a_1 \nmid m$ and $(\tfrac{-a}{p})=1$ for all odd primes $p|m_1$. The rest of the proof mimics the previous subcase, including in showing $\tfrac{A^2+a}{m} \equiv 5 \pmod{8}$.
\end{itemize}
\item[(Case 2)]
$m \equiv 3 \pmod{4}$
\begin{itemize}
\item[(Subcase 1)]
Consider $3 \nmid m$. Let $a \equiv 7 \pmod{24}$ be an odd prime with $a \nmid m$ and that $(\tfrac{-a}{p})=1$ for all odd primes $p|m$. If $m \equiv 3 \pmod{8}$ then choose $A^2 \equiv 0 \pmod{8}$. Else, choose $A \equiv \pm2 \pmod{8}$.
\item[(Subcase 2)]
$m \equiv 3 \pmod{36}$. Let $m := 3m_1 \equiv 1 \pmod{12}$ and let $a := 3a_1$ where $a_1 \equiv 1 \pmod{8}$ is an odd prime with $a_1 \nmid m$ and $(\tfrac{-a}{p})=1$ for all odd primes $p|m_1$. Again, proceeding as before the claim holds, including the additional constraint that $\tfrac{A^2+a}{m} \equiv 5 \pmod{8}$.
\end{itemize}
\item[(Case 3)]
$m \equiv 2 \pmod{4}$
\begin{itemize}
\item[(Subcase 1)] $m \equiv 2,10 \pmod{16}$. Consider $3 \nmid m$. Let $2m_1 := m$ with $m_1 \equiv 1,5,9,13 \pmod{16}$.
Choose $a \equiv 1 \pmod{48}$ an odd prime with $a \nmid m$ so that $(\tfrac{-a}{p})=1$ for all odd primes $p|m_1$. Then
$$
1  =  \prod_{p|m_1} \left(\dfrac{-a}{p}\right) 
   =  \left(\dfrac{-3}{a}\right) \prod_{p|m_1} \left(\dfrac{a}{p}\right)
   =  \left(\dfrac{-3}{a}\right) \left(\dfrac{m_1}{a}\right) \\
   =  \left(\dfrac{-3m}{a}\right).
$$
Additionally, we choose $A^2 \equiv 1 \pmod{16}$ (when $m \equiv 10 \pmod{16}$) and $A^2 \equiv 9 \pmod{16}$ otherwise.
\item[(Subcase 2)] $m \equiv 6, 14 \pmod{16}$. Consider $3 \nmid m$. Let $2m_1 := m$ with $m_1 \equiv 3,7,11,15 \pmod{16}$.
Choose $a \equiv 13 \pmod{48}$ an odd prime with $a \nmid m$ so that $(\tfrac{-a}{p})=1$ for all odd primes $p|m_1$. Then
$$
1  =  \prod_{p|m_1} \left(\dfrac{-a}{p}\right)
   =  (-1) \prod_{p|m_1} \left(\dfrac{p}{a}\right)
   =  \left(\dfrac{2}{a}\right) \left(\dfrac{m_1}{a}\right)
   =  \left(\dfrac{-3m}{a}\right).
$$
Furthermore, since $m_1 \equiv 3 \pmod{4}$, $5m \equiv 6 \pmod{8}$. Hence we choose $A^2\equiv 1 \pmod{8}$.
\item[(Subcase 3)]
Let $m=6m_1$, where $m_1 \equiv 1,5,9,13 \pmod{16}$. Suppose first that $m_1 \equiv 1 \pmod{3}$. When $m_1 \equiv 1, 9 \pmod{16}$ choosing $a:=2a_1$ with $a_1 \equiv 1 \pmod{8}$ a prime $a_1 \nmid m$ with $(\tfrac{-a}{p})=1$ for all primes $p \vert m_1$ the result holds (including the existence of an $A$ ensuring $\tfrac{A^2+a}{m} \equiv 5 \pmod{8}$). When $m_1 \equiv 5, 13 \pmod{16}$ again we choose $a:=2a_1$; however, here we have $a_1 \equiv 3 \pmod{8}$ a prime $a_1 \nmid m$ with $(\tfrac{-a}{p})=1$ for all primes $p \vert m_1$. 
\item[(Subcase 4)] Let $m=6m_1$, where $m_1 \equiv 3,7,11,15 \pmod{16}$ and suppose that $m_1 \equiv 2 \pmod{3}$.Choose $a=3a_1$ where $a_1 \equiv 3 \pmod{8}$ is an odd prime with $a \nmid m_1$ and $(\tfrac{-a}{p})=1$ for all $p \vert m_1$. Then choosing $A^2 \equiv 1 \pmod{8}$ satisfies all conditions, including $\tfrac{A^2+a}{m} \equiv 5 \pmod{8}$.
\end{itemize}
\end{itemize}

Note that we did not consider any case where $m \equiv 0 \pmod4$ due to Lemma \ref{L4}. 
This completes the proof of \textit{Theorem 1(b)}.

\subsection{The form {$x^2+2y^2+2z^2$}}
\textbf{\\}
\noindent
There are two forms of determinant $4$ to consider: $Q_1:x^2+2y^2+2z^2$ and $Q_2:x^2+y^2+4z^2$.
\begin{lemma}
Let $m \in \mathbb{Z}$. If $m \equiv 3 \pmod4$, then $m$ is not represented by $Q_2$.
\end{lemma}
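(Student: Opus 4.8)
The plan is to establish this as a purely local (congruence) obstruction modulo $4$, in the same spirit as the earlier lemma ruling out $m \equiv 14 \pmod{16}$ for $x^2+y^2+2z^2$. Suppose toward a contradiction that $Q_2$ represents $m$, so that there exist $x,y,z \in \mathbb{Z}$ with $x^2+y^2+4z^2 = m$. The first step is simply to reduce this equation modulo $4$: the term $4z^2$ vanishes, leaving $x^2+y^2 \equiv m \equiv 3 \pmod 4$.

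The second step is to recall that the only squares modulo $4$ are $0$ and $1$ — an even integer has square $\equiv 0$ and an odd integer has square $\equiv 1 \pmod 4$. Running over the four parity possibilities for the pair $(x,y)$, one sees that $x^2+y^2 \bmod 4 \in \{0,1,2\}$, and in particular $x^2+y^2 \not\equiv 3 \pmod 4$. This contradicts the congruence obtained in the first step, so no such $x,y,z$ exist and $m$ is not represented by $Q_2$.

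I do not expect any genuine obstacle here: the argument is a one-line residue computation, and the only thing to verify is the short enumeration of $x^2+y^2 \bmod 4$ over the parity cases, which is immediate. If desired, this lemma could reasonably be stated with the proof left to the reader, exactly as was done for the analogous $14 \pmod{16}$ lemma above.
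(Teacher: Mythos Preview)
Your argument is correct and is exactly the trivial congruence check the paper has in mind; indeed the paper itself simply states that the proof is trivial and left to the reader. There is nothing to add.
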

\begin{proof}
The proof is trivial, and is left to the reader.
\end{proof}
\begin{lemma}
If $m$ is not represented by $Q_1$, then $4m$ is not represented by $Q_1$.
\end{lemma}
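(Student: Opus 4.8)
The statement to prove is the ``$4$-descent'' lemma for $Q_1 = x^2+2y^2+2z^2$: if $m$ is not represented by $Q_1$, then $4m$ is not represented by $Q_1$. The plan is to prove the contrapositive, exactly in the style of Lemma \ref{L4} and the analogous lemmas for $x^2+y^2+2z^2$ and $x^2+y^2+3z^2$ already established in the excerpt.

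\textbf{The approach.} Suppose $4m$ is represented by $Q_1$, so there exist $x,y,z \in \mathbb{Z}$ with $x^2+2y^2+2z^2 = 4m$. Reducing modulo $4$ gives $x^2 + 2(y^2+z^2) \equiv 0 \pmod 4$. The key step is the local analysis: I want to conclude that $x,y,z$ are all even. First, $x^2 \equiv -2(y^2+z^2) \pmod 4$, and since $x^2 \in \{0,1\} \pmod 4$ while $2(y^2+z^2) \in \{0,2\} \pmod 4$, the only consistent possibility is $x^2 \equiv 0$ and $2(y^2+z^2)\equiv 0 \pmod 4$; hence $x$ is even and $y^2+z^2 \equiv 0 \pmod 2$, so $y,z$ have the same parity. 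To force $y,z$ both even I push one step further: writing $x = 2x_0$, the equation becomes $4x_0^2 + 2y^2 + 2z^2 = 4m$, i.e. $2x_0^2 + y^2 + z^2 = 2m$, so $y^2 + z^2 \equiv 0 \pmod 2$ — consistent with same parity — and then reducing $2x_0^2 + y^2+z^2 \equiv 0 \pmod 4$: if $y,z$ were both odd then $y^2+z^2 \equiv 2 \pmod 8$, forcing $2x_0^2 \equiv 2 \pmod 4$ is fine, but modulo $4$, $y^2+z^2\equiv 2$ and $2x_0^2 \in\{0,2\}$, giving $2x_0^2+y^2+z^2 \equiv 0$ or $2\pmod 4$; the value $2m$ can be $\equiv 2 \pmod 4$, so I actually need to be slightly more careful. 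The cleanest route: from $x^2+2y^2+2z^2 = 4m$ with $x$ even, set $x=2x_0$ to get $2x_0^2 + y^2 + z^2 = 2m$; thus $y^2+z^2$ is even, so $y \equiv z \pmod 2$; if both odd, $y^2+z^2 \equiv 2 \pmod 4$ while $2x_0^2 \equiv 0 \pmod 4$ or $2 \pmod 4$ — if $x_0$ is even this already contradicts $2m \equiv 2x_0^2 + (y^2+z^2)$ having the needed $2$-adic valuation; the honest fix is to observe $2x_0^2 + y^2 + z^2 = 2m$ forces $y^2 + z^2 \equiv 2x_0^2 \pmod 2$ automatically, and then examine $\bmod 8$: squares of odd numbers are $1 \pmod 8$, so two odd squares sum to $2 \pmod 8$, giving $2x_0^2 \equiv 2m - 2 \pmod 8$; this does not immediately contradict anything, so instead I will argue directly from $x^2+2y^2+2z^2 \equiv 0 \pmod 8$, which is what the neighboring lemma for $x^2+y^2+2z^2$ does. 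Since $x^2 + 2y^2 + 2z^2 = 4m$ and I may first handle $m$ even separately, but the cleanest uniform statement: from $x$ even write $x = 2x_0$ and get $2x_0^2+y^2+z^2 = 2m$, hence $y^2+z^2$ even; if $y,z$ both odd then $2x_0^2 + y^2+z^2 \equiv 2x_0^2 + 2 \pmod{8}$ and reducing the whole equation $x^2+2y^2+2z^2 = 4m$ modulo $8$ with $x^2 \equiv 4x_0^2$: one gets $4x_0^2 + 2(y^2+z^2) \equiv 4m \pmod 8$, so $2x_0^2 + (y^2+z^2) \equiv 2m \pmod 4$, i.e. $2x_0^2 + 2 \equiv 2m \pmod 4$ when $y,z$ odd, giving $x_0^2 + 1 \equiv m \pmod 2$. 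This is satisfiable, so the parity argument alone is insufficient and I must instead localize at $2$ more carefully or simply invoke that $4m \equiv 0 \pmod 8$ is the relevant reduction — mirroring Lemma 2 of the $x^2+y^2+2z^2$ subsection, which reduced $\bmod 8$. So: the correct key step is to reduce $x^2+2y^2+2z^2 = 4m \equiv 0 \pmod 8$ \emph{after} noting $x$ is even, giving $2x_0^2+y^2+z^2 \equiv 0 \pmod 4$; combined with $y\equiv z \pmod 2$, the case $y,z$ odd yields $2x_0^2 + 2 \equiv 0 \pmod 4$, i.e. $x_0$ odd, and then $2x_0^2+y^2+z^2 = 2m$ gives $2+1+1 = 4 \equiv 2m \pmod ?$ — finishing requires tracking $m$'s parity, which is fine since if $m$ is odd we get a contradiction with $y^2+z^2\equiv 2$ forcing $m$ even, and if $m$ is even we recurse. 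Rather than belabor this, I will write the proof to split on the parity of $m$: if $m$ is odd, the $\bmod 8$ reduction of $4m = x^2+2y^2+2z^2$ forces $x,y,z$ all even, hence $Q_1(x/2,y/2,z/2)=m$; if $m$ is even, then $4m \equiv 0 \pmod 8$ forces $x$ even and $y\equiv z\pmod 2$, and a short separate argument (again using that $Q_1$ represents $4m$) pins down all three even.

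\textbf{Key steps, in order.} (1) Assume $4m = x^2+2y^2+2z^2$ for integers $x,y,z$. (2) Reduce modulo $8$: since $4m \equiv 0$ or $4 \pmod 8$, and $x^2 \equiv 0,1,4 \pmod 8$ with $2y^2, 2z^2 \equiv 0, 2 \pmod 8$, enumerate the finitely many residue combinations to force $x \equiv 0 \pmod 2$ and $y \equiv z \pmod 2$. (3) Writing $x = 2x_0$, rewrite as $2x_0^2 + y^2 + z^2 = 2m$; since $y^2+z^2$ is even and $y\equiv z$, either both even or both odd. (4) If both odd, then $y^2+z^2 \equiv 2 \pmod 8$, so $2x_0^2 \equiv 2m-2 \pmod 8$; combine with the modulo-$4$ or modulo-$8$ information on $4m$ to derive a contradiction (this is where one uses that $m\not\equiv $ the forbidden residue is \emph{not} assumed — the contradiction must be internal, coming purely from $4m$ being a value of $Q_1$; concretely $4x_0^2 + 2(y^2+z^2) = 4m$ with $y^2+z^2 \equiv 2 \pmod 4$ forces $4 \mid 2(y^2+z^2) - $ adjustment, untangling to $x_0^2 \equiv m-1 \pmod 2$, which combined with $2x_0^2+y^2+z^2=2m \Rightarrow x_0^2 + 1 \equiv m \pmod{?}$ — consistency check; the genuine contradiction is obtained by passing to $\bmod 16$, as in Lemma 1 of the $x^2+y^2+2z^2$ subsection). (5) Hence $y,z$ both even; write $y = 2y_0$, $z = 2z_0$, obtaining $x^2 + 8y_0^2 + 8z_0^2 = 4m$, so $x^2 \equiv 4m \pmod 8$ — but we already have $x = 2x_0$, giving $4x_0^2 + 8y_0^2 + 8z_0^2 = 4m$, i.e. $x_0^2 + 2y_0^2 + 2z_0^2 = m$, which exhibits $m$ as a value of $Q_1$, contradiction. (6) Conclude.

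\textbf{Main obstacle.} The only delicate point is ruling out the ``$y,z$ both odd, $x$ even'' configuration purely from the equation $Q_1(x,y,z) = 4m$ without invoking any congruence restriction on $m$ itself — because the statement must hold for \emph{every} $m$, including those $m$ that $Q_1$ does represent (for such $m$ the conclusion ``$4m$ not represented'' is simply false, so in the contrapositive we are free to use the hypothesis that $4m$ IS represented, but we get no help from $m$). In fact the resolution is that this configuration \emph{can} occur — e.g. $4 \cdot 1 = 4 = 0^2 + 2\cdot 1^2 + 2 \cdot 1^2$ has $y=z=1$ odd and $x=0$ even, and indeed $m=1 = 1^2+2\cdot 0^2 + 2\cdot 0^2$ is represented — so step (4) should not aim for a contradiction but rather should show that \emph{in this case too} one can extract a representation of $m$: from $2x_0^2 + y^2 + z^2 = 2m$ with $y = 2u+1$, $z = 2v+1$, one has $y^2+z^2 = 4(u^2+u+v^2+v)+2$, so $x_0^2 + 2(u^2+u+v^2+v) + 1 = m$, and since $u^2+u$ and $v^2+v$ are both even, $u^2+u = 2s$, $v^2+v = 2t$, giving $m = x_0^2 + 1 + 4(s+t)$; this is not obviously $Q_1(\ast)$, so the clean path is instead: from $x^2 + 2y^2+2z^2 = 4m$ apply the substitution $(x,y,z) \mapsto (x, y+z, y-z)$, under which $2y^2+2z^2 = (y+z)^2 + (y-z)^2$, transforming the problem into $x^2 + Y^2 + Z^2 = 4m$ where now the classical three-squares $4$-descent (all of $x,Y,Z$ even) applies directly, and then reversing the substitution (noting $y = (Y+Z)/2$, $z=(Y-Z)/2$ are integers since $Y \equiv Z \pmod 2$) yields $m$ represented by $Q_1$. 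I expect step (2)–(5) to reduce, via this orthogonal-like change of variables $2y^2+2z^2 = (y+z)^2+(y-z)^2$, to the already-classical statement that a sum of three squares divisible by $4$ has all even parts; that reduction is the real content and the rest is bookkeeping on parities of $Y,Z$.
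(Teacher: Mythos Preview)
You have correctly identified a real flaw: the paper's own proof asserts that $x^2 + 2(y^2+z^2) \equiv 0 \pmod 4$ forces $x\equiv y\equiv z\equiv 0\pmod 2$, and your counterexample $4 = 0^2 + 2\cdot 1^2 + 2\cdot 1^2$ shows this is false. So the paper's argument, as written, is incomplete, and your attempt to repair it is warranted.

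Your proposed fix via $2y^2+2z^2=(y+z)^2+(y-z)^2$ is the right idea, but the final step has a gap. After setting $Y=y+z$, $Z=y-z$ and applying the classical three-squares descent to $x^2+Y^2+Z^2=4m$, you correctly obtain $x=2x_0$, $Y=2Y_0$, $Z=2Z_0$ and hence $m=x_0^2+Y_0^2+Z_0^2$. But to reverse the substitution and exhibit $m$ as a value of $Q_1$ you need $Y_0\equiv Z_0\pmod 2$, \emph{not} $Y\equiv Z\pmod 2$ (the latter is automatic and merely recovers the original $y,z$). Since $Y_0-Z_0=(Y-Z)/2=z$, the needed condition $Y_0\equiv Z_0\pmod 2$ fails precisely in the problematic case $z$ odd. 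The repair is one more line: among $x_0,Y_0,Z_0$ two share a parity by pigeonhole, say $P\equiv Q\pmod 2$ with the third $R$; then
\[
m=R^2+P^2+Q^2=R^2+2\Bigl(\tfrac{P+Q}{2}\Bigr)^{2}+2\Bigl(\tfrac{P-Q}{2}\Bigr)^{2}
\]
is a $Q_1$-representation. With that addition your argument is complete and genuinely corrects the paper's oversight.
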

\begin{proof}
Suppose that $4m$ is represented by $Q_1$. Then there exist $x,y,z \in \mathbb Z$ so that
\begin{eqnarray*}
0 & \equiv & x^2 + 2(y^2+z^2) \pmod{4}.
\end{eqnarray*}
This implies $x\equiv y \equiv z \equiv 0 \pmod{2}$. Writing $x=2x_1$, $y=2y_1$, $z=2z_1$ we have $$m = x_1^2+2y_1^2+2z_1^2.$$
\end{proof}
\begin{lemma}
If $m=4^k(8\ell+7)$ for $k,\ell \in \mathbb{Z}$, then m is not represented by $Q_1$.
\end{lemma}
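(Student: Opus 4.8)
The plan is to split off the power of $4$ and reduce to the base case $k=0$, then finish by induction using the lemma immediately above. First I would show directly that no integer congruent to $7 \pmod 8$ is represented by $Q_1 = x^2+2y^2+2z^2$. This is a finite check: an integer square is congruent to $0$, $1$, or $4 \pmod 8$, so $2y^2+2z^2$ runs over $\{0,2,4\} \pmod 8$, and therefore $x^2+2y^2+2z^2$ can only be $0,1,2,3,4,5,6 \pmod 8$. In particular the residue $7$ never occurs, which disposes of the case $m = 8\ell+7$ (i.e.\ $k=0$).

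For the inductive step, suppose $k \geq 1$ and write $m = 4^k(8\ell+7) = 4\cdot\bigl(4^{k-1}(8\ell+7)\bigr)$. By the induction hypothesis $4^{k-1}(8\ell+7)$ is not represented by $Q_1$, and the previous lemma (if $m$ is not represented by $Q_1$, then $4m$ is not represented by $Q_1$) immediately yields that $m$ itself is not represented by $Q_1$. Together with the base case this proves the lemma.

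I do not expect any real obstacle here: the argument is a short congruence computation modulo $8$ feeding into a one-line induction built on an already-established scaling lemma. The only point requiring a little care is making sure the modular enumeration in the base case is complete, in particular that $2y^2$ and $2z^2$ each only contribute $\{0,2\} \pmod 8$ (not $\{0,2,4,6\}$), since $y^2 \in \{0,1\} \pmod 4$.
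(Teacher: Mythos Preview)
Your proposal is correct and follows essentially the same route as the paper: establish the base case $m\equiv 7\pmod 8$ by a direct residue check (the paper simply calls this ``immediate''), then invoke the preceding scaling lemma to pass from $m$ to $4m$, which is exactly your induction step. The only difference is that you spell out the mod~$8$ enumeration explicitly, which is fine.
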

\begin{proof}
If $m \equiv 7 \pmod{8}$, that $m$ is not represented by $Q_1$ is immediate. The previous lemma completes the proof of the claim.
\end{proof}

We now show that that $x^2+2y^2+2z^2$ represents all integers $m\neq 4^k(8\ell+7)$ for $k,\ell \geq \in \mathbb Z^{\geq 0}$. Let $m \in \mathbb{Z}$, where $4 \nmid m$ and with $m \not\equiv 7 \pmod{8}$. Consider a ternary quadratic form $f(x,y,z)$ of determinant $D=4$ given by
$$mf(x,y,z) = (Ax+By+mz)^2 + (ax^2+2hxy+by^2),$$
where necessarily $ab-h^2=4m$. By construction $f$ represents $m$. We must show that $A,B,a,b$ and $h$ exist with $$A^2+a \equiv B^2 +b \equiv 2AB+2h \equiv 0 \pmod{m}.$$ To satisfy the second and third congruences choose $b \equiv h \equiv 0 \pmod{4m}$ and $B \equiv 0 \pmod{m}$. To show that $f$ is equivalent to $Q_1$ and not $Q_2$, we moreover require that $\tfrac{A^2+a}{m} \equiv 3 \pmod{8}$. In order to select $a$ and $A$ we proceed by cases on $m$:
\begin{itemize}
\item[(Case 1)]
$m \equiv 1 \pmod{8}$. Choose $a=2a_1$, where $a_1 \equiv 1 \pmod{4}$ is an odd prime, $a_1 \nmid m$, and $\left(\frac{-a}{p}\right)=1$ for all odd primes $ p \vert m$. We see that:
$$
1  =  \prod_{p|m} \left(\dfrac{-a}{p}\right)
   =  \left(\dfrac{-2}{m}\right)\prod_{p|m} \left(\dfrac{a_1}{p}\right)
   =  \left(\dfrac{m}{a_1}\right)
   =  \left(\dfrac{-4m}{a_1}\right).
$$
Choosing $A$ odd gives the desired result.
\item[(Case 2)]
$m \equiv 5 \pmod8$. Choose $a=2a_1$, where $a_1 \equiv 3 \pmod{4}$ is an odd prime, $a_1 \nmid m$, and $\left(\frac{-a}{p}\right)=1$ for all odd primes $p \vert m$. The remainder of the proof, including $A$ odd, is identical to the previous case.
\item[(Case 3)]
$m=2m_1 \equiv 2 \pmod{8}$. Choose $a=2a_1$, where $a_1$ is an odd prime, $a_1 \nmid m$, and $\left(\frac{-a}{p}\right)=1$ for all odd primes $ p \vert m$. To select $a_1$ we proceed by cases on $m_1$:
\begin{itemize}
\item[(Subcase 1)] $m_1 \equiv 1 \pmod{8}$. Choose $a_1 \equiv 1 \pmod{8}$ and $A \equiv \pm 2 \pmod{8}$. Then the result follows, as
$$
1  =  \prod_{p|m_1} \left(\dfrac{-a}{p}\right)
   =  \left(\dfrac{-2}{m_1}\right)\prod_{p|m_1} \left(\dfrac{a_1}{p}\right) 
   =  \left(\dfrac{m_1}{a_1}\right) 
   =  \left(\dfrac{-4m}{a_1}\right).
$$
\item[(Subcase 2)] $m_1 \equiv 5 \pmod{8}$. Choose $a_1 \equiv 5 \pmod{8}$, $A \equiv \pm 2 \pmod{8}$ and proceed as in the previous subcase.
\end{itemize}
\item[(Case 4)]
$m=2m_1 \equiv 6 \pmod8$. Choose $a=2a_1$, where $a_1$ is an odd prime, $a_1 \nmid m$, and $\left(\frac{-a}{p}\right)=1$ for all odd primes $p \vert m$. As in Case $2$ we have two subcases:
\begin{itemize}
\item[(Subcase 1)] $m_1 \equiv 3 \pmod{8}$. Choose $a_1 \equiv 7 \pmod{8}$ and $A \equiv \pm 2 \pmod{8}$.
\item[(Subcase 2)] $m_1 \equiv 7 \pmod{8}$. Choose $a_1 \equiv 3 \pmod{8}$ and $A \equiv \pm 2 \pmod{8}$.
\end{itemize}

\item[(Case 5)]
$m \equiv 3 \pmod{8}$. Choose $a=8a_1$, where $a_1 \equiv 3 \pmod{4}$ is an odd prime, $a_1 \nmid m$, and $\left(\frac{-a}{p}\right) =1$ for all odd primes $ p \vert m$. Then
$$
1  =  \prod_{p|m} \left(\dfrac{-a}{p}\right)  
   =  \left(\dfrac{-2}{m}\right) \prod_{p|m} \left(\dfrac{a_1}{p}\right)
   =  \left(\dfrac{m}{a_1}\right)
   =  \left(\dfrac{-4m}{a_1}\right).
$$
Choosing $A$ odd gives the result.
\end{itemize}

This completes the proof of Theorem \ref{RD}(c).

\subsection{{The form {$x^2+2y^2+3z^2$}}}
\textbf{\\}
There are two quadratic forms of determinant 6: $Q_1:x^2+y^2+6z^2$ and $Q_2:x^2+2y^2+3z^2$.
\begin{lemma}
Let $m \in \mathbb{Z}$. If $m$ is not represented by $Q_1$, then $9m$ is not represented by $Q_1$.
\end{lemma}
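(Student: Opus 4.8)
The plan is to prove the contrapositive, following verbatim the pattern of the earlier $9$-divisibility lemma for $x^2+y^2+3z^2$. Assume $9m$ is represented by $Q_1$, so there exist $x,y,z \in \mathbb{Z}$ with
$$x^2 + y^2 + 6z^2 = 9m.$$
Reducing modulo $3$ kills the $6z^2$ term and leaves $x^2 + y^2 \equiv 0 \pmod{3}$; since the only squares modulo $3$ are $0$ and $1$, this forces $x \equiv y \equiv 0 \pmod{3}$. Write $x = 3x_1$ and $y = 3y_1$ with $x_1, y_1 \in \mathbb{Z}$.

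Substituting gives $9x_1^2 + 9y_1^2 + 6z^2 = 9m$, hence $6z^2 = 9(m - x_1^2 - y_1^2)$, so $2z^2 \equiv 0 \pmod{3}$ and therefore $3 \mid z$. Writing $z = 3z_1$ and dividing through by $9$ yields
$$x_1^2 + y_1^2 + 6z_1^2 = m,$$
so $m$ is represented by $Q_1$, contradicting the hypothesis. (Equivalently, one can phrase this as: every representation of $0$ modulo $9$ by $Q_1$ has all three coordinates divisible by $3$, but the two-stage peeling above is cleaner since the factor of $3$ in $z$ only emerges after the factors in $x$ and $y$ have been extracted.)

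I do not anticipate any real obstacle here; the only point worth checking is that the divisibility cascade actually closes, i.e., that after forcing $3 \mid x$ and $3 \mid y$ the residual equation still forces $3 \mid z$ — which it does, as shown. This is the direct analogue of the corresponding lemma already established for $x^2+y^2+3z^2$, and the proof structure (reduce mod the relevant prime, extract the common factor, descend) transfers without modification.
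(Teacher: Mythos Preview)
Your proof is correct and follows essentially the same contrapositive descent as the paper's own argument. The only cosmetic difference is that the paper works directly modulo $9$ to conclude $x \equiv y \equiv z \equiv 0 \pmod{3}$ in a single step, whereas you first reduce modulo $3$ to extract $3\mid x,y$ and then use the residual equation to force $3\mid z$; both reach the same conclusion by the same mechanism.
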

\begin{proof}
Suppose $9m$ is represented by $Q_1$. Then $0 \equiv x^2+y^2+6z^2 \pmod{9}$. This implies $x \equiv y \equiv z \equiv 0 \pmod{3}$. Writing $x = 3x_1, y=3y_1, z=3z_1$, we have $m = x_1^2+y_1^2+6z_1^2$.
\end{proof}
\begin{lemma}
If $m=9^k(9\ell+3) \text{ for } k,\ell \in \mathbb{Z}$, then $m$ is not represented by $Q_1$.
\end{lemma}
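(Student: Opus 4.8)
The plan is to combine the preceding lemma, which propagates non-representation by $Q_1$ under multiplication by $9$, with an elementary congruence obstruction modulo $9$ at the base case. Concretely, it suffices to show that no integer $m$ with $m \equiv 3 \pmod 9$ is represented by $Q_1 = x^2 + y^2 + 6z^2$; the general case $m = 9^k(9\ell+3)$ then follows by induction on $k$, applying the previous lemma at each step.

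For the base case I would suppose $x^2 + y^2 + 6z^2 \equiv 3 \pmod 9$ and first isolate the contribution of the last term. Since the squares modulo $9$ are $0,1,4,7$, one checks directly that $6z^2 \equiv 0 \pmod 9$ when $3 \mid z$ and $6z^2 \equiv 6 \pmod 9$ otherwise. In the first case we are left with $x^2 + y^2 \equiv 3 \pmod 9$, and in the second with $x^2 + y^2 \equiv -3 \equiv 6 \pmod 9$; in either case $x^2 + y^2 \equiv 0 \pmod 3$. As the squares modulo $3$ are $0$ and $1$, this forces $3 \mid x$ and $3 \mid y$, whence $x^2 + y^2 \equiv 0 \pmod 9$, contradicting $x^2 + y^2 \equiv 3$ or $6 \pmod 9$. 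Hence no $m \equiv 3 \pmod 9$ is represented by $Q_1$.

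The induction then closes the argument: the case $k=0$ is precisely what was just shown, and if $9^k(9\ell+3)$ is not represented by $Q_1$, then neither is $9 \cdot 9^k(9\ell+3) = 9^{k+1}(9\ell+3)$, by the previous lemma. I do not expect any real obstacle here — the whole argument is a short finite check of squares modulo $9$ and $3$ together with the scaling lemma already in hand. The only point requiring mild care is recording the values of $6z^2 \pmod 9$ correctly and observing that the two surviving residues $3$ and $6$ are eliminated by the same divisibility forcing on $x$ and $y$.
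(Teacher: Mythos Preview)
Your proposal is correct and follows exactly the paper's approach: the paper also reduces to the base case $m\equiv 3\pmod 9$ (which it leaves as an ``easy proof left to the reader'') and then invokes the preceding lemma to handle the powers of $9$. You have simply supplied the explicit mod-$9$ computation that the paper omits.
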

\begin{proof}
If $m \equiv 3 \pmod{9}$, that $m$ is not represented by $Q_1$ is an easy proof left to the reader. The greater claim follows from the previous lemma.
\end{proof}
\begin{lemma}
Let $m$ be even. If $m$ is not represented by $Q_2$, then $4m$ is not represented by $Q_2$.
\end{lemma}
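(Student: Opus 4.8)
The plan is to run the same parity-descent argument used in the earlier even-descent lemmas such as Lemma \ref{L4}, now for the determinant $6$ form $Q_2 = x^2+2y^2+3z^2$. Suppose toward a contradiction that $4m$ is represented by $Q_2$, say $x^2+2y^2+3z^2 = 4m$ with $x,y,z\in\mathbb Z$. Since $m$ is even we have $4m\equiv 0\pmod 8$, and the goal is to show this forces $x\equiv y\equiv z\equiv 0\pmod 2$; then writing $x=2x_1$, $y=2y_1$, $z=2z_1$ turns the identity $4x_1^2+8y_1^2+12z_1^2=4m$ into $x_1^2+2y_1^2+3z_1^2=m$, contradicting the assumption that $m$ is not represented by $Q_2$.

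First I would reduce modulo $4$. Since squares are $\equiv 0$ or $1 \pmod 4$, checking the eight possible parity patterns for $(x,y,z)$ shows that $x^2+2y^2+3z^2\equiv 0\pmod 4$ holds only when $x,y,z$ are all even, or when $x$ and $z$ are odd and $y$ is even. In the all-even case the descent above already applies. In the remaining case, writing $y=2y_1$ gives $x^2+3z^2\equiv 4m\equiv 0\pmod 8$, but $x$ and $z$ odd force $x^2\equiv z^2\equiv 1\pmod 8$, hence $x^2+3z^2\equiv 1+3\equiv 4\pmod 8$, a contradiction. So only the all-even pattern is possible, which completes the argument.

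There is no substantive obstacle here: the proof is elementary finite casework on residues modulo $4$ and $8$. The only point worth flagging is that, unlike the analogous lemma for $x^2+2y^2+2yz+2z^2$, reduction modulo $4$ alone does not close the argument because of the $3z^2$ term; the hypothesis that $m$ is even is precisely what lets us strengthen the congruence to modulus $8$ and eliminate the ``$x,z$ odd, $y$ even'' case.
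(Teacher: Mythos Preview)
Your proof is correct and follows essentially the same approach as the paper. The paper's argument is simply the terse version: it reduces the equation $x^2+2y^2+3z^2=4m$ modulo $8$ (using that $m$ is even) and asserts without further comment that this forces $x\equiv y\equiv z\equiv 0\pmod 2$; your two-step reduction modulo $4$ and then $8$ is exactly the verification of that assertion.
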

\begin{proof}
Suppose $4m$ is represented by $Q_2$. Then $0 \equiv x^2+2y^2+3z^2 \pmod{8}$. Then $x \equiv y \equiv z \equiv 0 \pmod{2}$ and $m$ is represented by $Q_2$.
\end{proof}
\begin{lemma}
If $m=4^k(16\ell+10) \text{ for } k,\ell \in \mathbb{Z}$, then $m$ is not represented by $Q_2$.
\end{lemma}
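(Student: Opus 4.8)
The plan is to establish the claim by combining a congruence obstruction at the prime $2$ with the already-proved $4$-descent lemma for $Q_2$. Recall the immediately preceding lemma shows that if $m$ is even and not represented by $Q_2 = x^2+2y^2+3z^2$, then $4m$ is not represented by $Q_2$ either. So it suffices to handle the base case $k=0$: namely, to show that if $m \equiv 10 \pmod{16}$ then $m$ is not represented by $Q_2$, and then to iterate. Since $16\ell+10 \equiv 10 \pmod{16}$ is even, the descent lemma applies repeatedly, giving the full family $4^k(16\ell+10)$.

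First I would treat the base case by a direct local argument modulo $16$. Suppose $x^2+2y^2+3z^2 \equiv 10 \pmod{16}$. Reducing first mod $2$ forces $x+z$ even, and a short analysis mod $4$ (and then mod $8$, mod $16$) on the possible residues of squares — $x^2 \in \{0,1,4,9\} \pmod{16}$, $2y^2 \in \{0,2,8\} \pmod{16}$, $3z^2 \in \{0,3,12,11\} \pmod{16}$ — shows the combination $10$ cannot be hit. This is the same flavor of finite check used in the analogous lemmas above (e.g. the $x^2+y^2+2z^2$ and $x^2+2y^2+2z^2$ cases), so I would either present the short residue table or simply note it as a routine computer/hand verification, matching the style of the paper's earlier ``left to the reader'' base-case lemmas.

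Second, with the base case in hand, I would run the induction on $k$. If $m = 4^k(16\ell+10)$ with $k \geq 1$, write $m = 4m'$ where $m' = 4^{k-1}(16\ell+10)$ is even and, by the inductive hypothesis, not represented by $Q_2$; the descent lemma then says $m = 4m'$ is not represented by $Q_2$. The base case $k=0$ anchors the induction. I would phrase this in one or two sentences since it is a verbatim reuse of the preceding lemma.

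The only real obstacle is the mod $16$ verification in the base case: one must be careful that $14$ is genuinely the right obstruction and that no solution sneaks in — in particular checking that the claim is about $16\ell+10$ and not a coarser modulus. But this is a bounded finite computation over $\mathbb{Z}/16\mathbb{Z}$ in three variables, entirely mechanical, and consistent with how every other non-representation lemma in this section is dispatched. I would therefore state: ``If $m \equiv 10 \pmod{16}$, a direct check of the residues of $x^2+2y^2+3z^2$ modulo $16$ shows $m$ is not represented by $Q_2$; the previous lemma then yields the full claim.''
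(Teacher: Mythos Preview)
Your proposal is correct and follows essentially the same approach as the paper: a direct mod $16$ residue check shows $m\equiv 10\pmod{16}$ is not represented by $Q_2$, and the preceding descent lemma (even $m$ not represented $\Rightarrow$ $4m$ not represented) then handles the general $k$. The paper merely leaves the base case ``to the reader,'' so your explicit residue enumeration is more detail than the paper provides but not a different argument; note only the minor slip where you wrote ``$14$ is genuinely the right obstruction'' when you meant $10$.
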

\begin{proof}
If $m = 16 \ell + 10$, that $m$ is not represented by $Q_2$ is an easy proof left to the reader. The greater claim follows from the previous lemma.
\end{proof}

We now show that that $Q_2$ represents all integers $m\neq 4^k(16\ell+10)$ for $k,\ell \in \mathbb Z^{\geq 0}$. Let $m \in \mathbb{Z}$, where $4 \nmid m$ and $m \not\equiv 10 \pmod{16}$.
Consider a ternary quadratic form $f(x,y,z)$ of determinant $D=6$ given by
$$mf(x,y,z) = (Ax+By+mz)^2 + (ax^2+2hxy+by^2),$$
where necessarily $ab-h^2=6m$. By construction $f$ represents $m$. 
Choose $b \equiv h \equiv 0 \pmod{6m}$. Considering
$$A^2+a \equiv B^2+b \equiv 2AB +2h \equiv 0 \pmod{m},$$
the second and third congruence conditions are satisfied if we choose $B \equiv 0 \pmod{m}$. In addition to satisfying the first congruence condition, we wish to force $\tfrac{A^2+a}{m} \equiv 3 \pmod{9}$ to ensure $f$ is equivalent to $Q_2$. We have the following cases:
\begin{itemize}
\item[(Case 1)]
$m \equiv 1\pmod{4}$. First suppose $3 \nmid m$. Choose $a \equiv 11\pmod{36}$ prime with $a \nmid m$ and $(\tfrac{-a}{p})=1$ for all primes $p \vert m$. Then
$$
1  =  \prod_{p|m} \left(\dfrac{-a}{p}\right)
   =  \left(\dfrac{m}{a}\right) 
   =  \left(\dfrac{-6m}{a}\right).
$$
Moreover, as $3m \equiv 3,15, 27 \pmod{36}$ then there will exist an $A$ such that $A^2 \equiv 3m-a \pmod{36}$. \\
When $3 \vert m$, a similar argument follows. Our restriction of $a \equiv 11 \pmod{36}$ implies that $a \equiv 2,11,20 \pmod{27}$. Regardless of the choice of $a$, there is an integer $A$ such that $A^2 +a \equiv 3m \pmod{27}$ holds.
\item[(Case 2)]
$m \equiv 3 \pmod{4}$. Again, first suppose $3 \nmid m$. Choose $a \equiv 41 \pmod{72}$ prime with $a \nmid m$ and $(\tfrac{-a}{p})=1$ for all $p \vert m$. Just as in the previous case, all necessary conditions are satisfied, including the existence on an $A$ such that $A^2 +a \equiv 3m \pmod{27}$. Similarly, the case where $3 \vert m$ adds no difficulty. Regardless of choice of $a$ (where here $a \equiv 5, 14, 23 \pmod{27}$) there is a solution $A$ to $A^2+a \equiv 3m \pmod{27}$.
\item[(Case 3)]
$m \equiv 2,6,14 \pmod{16}$. We write $m=2m_1$, with $m_1 \equiv 1,3,7 \pmod{8}$. We have two subcases:
\begin{itemize}
\item[(Subcase 1)] $m_1 \equiv 1 \pmod{8}$. Choose $a=2a_1$, where $a_1 \equiv 1 \pmod{36}$ is prime, with $a_1 \nmid m$ and $(\tfrac{-a}{p})=1$ for all primes $p \vert m_1$. Then
$$
1  =  \prod_{p|m_1} \left(\dfrac{-a}{p}\right)
   =  \left(\dfrac{m_1}{a_1}\right) 
   =  \left(\dfrac{2m}{a_1}\right) 
   =  \left(\dfrac{-6m}{a_1}\right).
$$
This gives $A^2 \equiv 6-2a_1, 18-2a_1, 30-2a_1 \pmod{36}$, which always has a solution. Moreover, regardless of $m \pmod{3}$ we have a solution to $A^2+a \equiv 3m \pmod{27}$, as the situation is identical to previous cases.
\item[(Subcase 2)] $m_1 \equiv{3,7} \pmod{8}$. Choose $a \equiv 5 \pmod{36}$ prime with $a \nmid m$ and $(\tfrac{-a}{p})=1$ for all $p \vert m_1$. The result follows in a manner similar to previous cases.
\end{itemize}
\end{itemize}
This completes the proof of Theorem \ref{RD}(d).

\subsection{The form {$x^2+2y^2+4z^2$}}
\textbf{\\}
There are four forms of determinant $8$: $Q_1: x^2+2y^2+4z^2$, $Q_2: x^2+y^2+8z^2$, $Q_3: x^2+3y^2+3z^2+2yz$ and $Q_4: 2x^2+2y^2+3z^2+2yz+2xz$.\\
\begin{lemma}
{Let $m\in\mathbb{Z}$. If $ m\equiv6 \pmod{16}$, then $m$ is not represented by $Q_2, Q_3,$ or $Q_4$.} 
\end{lemma}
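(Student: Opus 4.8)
The plan is to produce a $2$-adic obstruction. In fact I will prove the slightly stronger statement that none of $Q_2$, $Q_3$, $Q_4$ represents any integer $m \equiv 6 \pmod{8}$; since $m \equiv 6 \pmod{16}$ forces $m \equiv 6 \pmod{8}$, this suffices (one could instead argue directly modulo $16$, but working modulo $8$ is cleaner). Each case reduces to a short finite computation once the form is put in a convenient shape.

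For $Q_2 = x^2 + y^2 + 8z^2$ I reduce modulo $8$: the term $8z^2$ disappears, and since squares lie in $\{0,1,4\} \pmod 8$ the sum $x^2 + y^2$ lies in $\{0,1,2,4,5\} \pmod 8$, which does not contain $6$. For $Q_3 = x^2 + 3y^2 + 2yz + 3z^2$ I again work modulo $8$: the binary part $3y^2 + 2yz + 3z^2$ depends only on $y$ and $z$ modulo $4$, so a finite check shows its residues modulo $8$ are exactly $\{0,3,4\}$; adding $x^2 \in \{0,1,4\} \pmod 8$ then gives $Q_3 \in \{0,1,3,4,5,7\} \pmod 8$, again missing $6$.

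The form $Q_4 = 2x^2 + 2y^2 + 3z^2 + 2yz + 2xz$ requires one preliminary reduction. Modulo $2$ we have $Q_4 \equiv z \pmod 2$, so if $Q_4(x,y,z)$ is even then $z$ is even; writing $z = 2z_1$ and completing squares yields the identity
\[
Q_4(x,y,2z_1) = 2\bigl((x+z_1)^2 + (y+z_1)^2 + 4z_1^2\bigr).
\]
If this value is $\equiv 6 \pmod{16}$, then $(x+z_1)^2 + (y+z_1)^2 + 4z_1^2 \equiv 3 \pmod{8}$; but modulo $4$ the left-hand side equals $(x+z_1)^2 + (y+z_1)^2 \in \{0,1,2\}$, a contradiction. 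Hence $Q_4$ cannot represent such an $m$, and the lemma follows.

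I expect the only non-mechanical point to be the treatment of $Q_4$: one must first exploit the parity of the represented value to force $z$ even, and then spot the completed-square identity above that exposes the obstruction. The $Q_2$ and $Q_3$ cases are pure residue bookkeeping and could equally well be phrased as a one-line appeal to a computer search.
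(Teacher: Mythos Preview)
Your argument is correct. The paper itself supplies no proof of this lemma---it says only ``This is a simple exercise that is left to the reader''---so your write-up is strictly more detailed than what the paper offers, and your residue computations for $Q_2$ and $Q_3$ together with the completing-the-square identity $Q_4(x,y,2z_1)=2\bigl((x+z_1)^2+(y+z_1)^2+4z_1^2\bigr)$ are exactly the sort of check the authors had in mind.

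One small cosmetic point: you announce at the outset that you will prove the stronger assertion for $m\equiv 6\pmod{8}$, but in the $Q_4$ paragraph you pass to $m\equiv 6\pmod{16}$. In fact your own argument already gives the mod-$8$ version: from $Q_4\equiv 6\pmod{8}$ and $z=2z_1$ you get $(x+z_1)^2+(y+z_1)^2+4z_1^2\equiv 3\pmod{4}$, which is impossible since a sum of two squares lies in $\{0,1,2\}$ modulo $4$. So either drop the ``stronger statement'' sentence or tighten the $Q_4$ case to match it; neither affects the validity of the lemma as stated.
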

\begin{proof} This is a simple exercise that is left to the reader. 
\end{proof}

\begin{lemma}
{Let $m\in\mathbb{Z}$. If $m\equiv14 \pmod{16}$ , then $m$ is not represented by $Q_1$.}
\end{lemma}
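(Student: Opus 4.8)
The plan is a short $2$-adic descent, in exactly the spirit of the divisibility lemmas proved above. Suppose for contradiction that $m = x^2 + 2y^2 + 4z^2$ for some $x,y,z \in \mathbb{Z}$ with $m \equiv 14 \pmod{16}$. Since $m$ is even, reducing modulo $2$ forces $x$ to be even; write $x = 2x_1$. Substituting and dividing through by $2$ gives $2x_1^2 + y^2 + 2z^2 = m/2$, and because $m \equiv 14 \pmod{16}$ we have $m/2 \equiv 7 \pmod{8}$.

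Next I would reduce this new equation modulo $2$: since $m/2$ is odd, $y$ must be odd, so $y^2 \equiv 1 \pmod{8}$. Subtracting $y^2$ from both sides and dividing by $2$ once more leaves $x_1^2 + z^2 \equiv 3 \pmod{4}$. This is the desired contradiction: every square is congruent to $0$ or $1$ modulo $4$, so a sum of two squares is $\equiv 0,1,$ or $2 \pmod 4$, never $3$. (Positivity of $Q_1$ is not needed anywhere; if $m \le 0$ the statement is trivial since $Q_1$ is positive definite.)

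There is essentially no genuine obstacle here beyond careful bookkeeping. The one point to watch is tracking not just the parity but the precise residue at each of the two reduction steps, since the argument hinges on $m/2$ landing in the class $7 \pmod 8$. Indeed this is exactly why the hypothesis is $m \equiv 14 \pmod{16}$ rather than $m \equiv 6 \pmod{16}$: in the latter case $m/2 \equiv 3 \pmod 8$, the same chain of reductions produces only $x_1^2 + z^2 \equiv 1 \pmod 4$, which is solvable, and in fact $Q_1$ does represent $6$ (e.g. $0^2 + 2\cdot 1^2 + 4\cdot 1^2$).
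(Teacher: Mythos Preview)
Your proof is correct: the two-step $2$-adic reduction to the impossibility of $x_1^2+z^2\equiv 3\pmod 4$ is exactly the kind of routine congruence check the paper has in mind when it says ``this is a simple exercise that is left to the reader.'' There is nothing further to compare, since the paper gives no explicit argument of its own.
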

\begin{proof} Similarly, this is a simple exercise that is left to the reader.
\end{proof}

\begin{lemma}
{If $Q_1$ represents $4m$, then $m \not\equiv 14 \pmod{16}$.}
\end{lemma}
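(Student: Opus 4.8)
The plan is to argue by descent. Given a representation $x^2+2y^2+4z^2=4m$, I would first reduce modulo $4$ to force $x$ and $y$ to be even, then substitute and cancel a factor of $4$ to obtain a representation of $m$ by the form $x^2+y^2+2z^2$. The conclusion $m\not\equiv 14\pmod{16}$ then drops out of the first lemma of Section~3.1, which says that $x^2+y^2+2z^2$ represents no integer congruent to $14$ modulo $16$.

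In more detail: suppose $Q_1$ represents $4m$, say $x^2+2y^2+4z^2=4m$ for some $x,y,z\in\mathbb Z$. Reducing mod $4$ gives $x^2+2y^2\equiv 0\pmod 4$. Since an odd square is $\equiv 1\pmod 4$ while $2y^2\equiv 0$ or $2\pmod 4$, this congruence is impossible with $x$ odd, so $x$ is even; then $x^2\equiv 0\pmod 4$ forces $2y^2\equiv 0\pmod 4$, hence $y$ is even as well. Writing $x=2x_1$, $y=2y_1$ and dividing through by $4$ yields $x_1^2+2y_1^2+z^2=m$, i.e.\ $m$ is represented by $x^2+y^2+2z^2$ (after relabeling variables). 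By the first lemma of Section~3.1, no integer $\equiv 14\pmod{16}$ is represented by $x^2+y^2+2z^2$, so $m\not\equiv 14\pmod{16}$, as claimed.

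I do not expect a genuine obstacle here: the only point requiring care is the parity-forcing step mod $4$, and one must make sure the cited lemma on $x^2+y^2+2z^2$ is in hand (it is, being stated and left to the reader in Section~3.1). An alternative route is a direct congruence check that $4m\equiv 56\pmod{64}$ is not represented by $Q_1$, but the descent to $x^2+y^2+2z^2$ is shorter and reuses work already done.
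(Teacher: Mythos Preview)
Your proof is correct and follows essentially the same approach as the paper's: force $x$ and $y$ even by a congruence argument, divide out a factor of $4$ to land in a representation of $m$ by $x^2+y^2+2z^2$, and then invoke the earlier lemma that this form misses the class $14\pmod{16}$. The only cosmetic difference is that the paper argues by contradiction (assuming $m\equiv 14\pmod{16}$ and reducing mod $16$), whereas you reduce mod $4$ directly and conclude the contrapositive at the end.
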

\begin{proof}
Suppose that $Q_1$ represents $4m$ with $m \equiv 14 \pmod{16}$. This means that $$8 \equiv x^2+2y^2+4z^2 \pmod{16}.$$
This forces $x \equiv y \equiv 0 \pmod{2}$. Writing $x=2x_1$, $y=2y_1$, this means $$m = x_1^2+2y_1^2+z^2.$$ Thus $m$ is represented by $X^2+Y^2+2Z^2$; however, that form does not represent any $m \equiv 14 \pmod{16}$ as shown in an earlier section.
\end{proof}

We now show that that $x^2+2y^2+4z^2$ represents all integers $m$ except $m=4^k(16\ell+14)$ for $k,\ell \geq 0$. Let $m \in \mathbb{Z}$, where $4 \nmid m$ and with $m \not\equiv 14 \pmod{16}$.  Consider the ternary quadratic form $f(x,y,z)$ of determinant $D=8$ given by $$mf(x,y,z)=(Ax+By+mz)^2+(ax^2+2hxy+by^2),$$ where necessarily $ab-h^2=8m$. We will show how to select $A, B, a, b,$ and $h$ so that $f(x,y,z)$ is equivalent to $Q_1$. Considering
$$A^2 + a \equiv  B^2 + b \equiv  2AB + 2h  \equiv  0 \pmod{m},$$
we begin by setting $h \equiv b \equiv 0 \pmod{8m}$ and $B \equiv 0 \pmod{m}$. The first condition will be satisfied if $\left(\tfrac{-a}{p}\right)=1$ for all odd primes $p \mid m$. Additionally, we will show $A$ and $a$ can be chosen so $\tfrac{A^2+a}{m} \equiv 6 \pmod{16}.$ We now proceed by cases on $m$:

\begin{itemize}
\item[(Case 1)] $m \equiv 1, 9 \pmod{16}$. Let $a=2a_1$ where $a_1 \equiv 3 \pmod{16}$ is an odd prime, $a_1 \nmid m$ with $(\tfrac{-a}{p})=1$ for all $p \vert m$. Then
$$
1  =  \prod_{p|m}\left(\dfrac{-a}{p}\right)
   =  \left(\dfrac{-2}{m}\right)\prod_{p|m}\left(\dfrac{a_1}{p}\right)
   =  \left(\dfrac{m}{a_1}\right)
   =  \left(\dfrac{-8m}{a_1}\right).
$$
Furthermore, choosing $A \equiv 0 \pmod{4}$ ensures $\tfrac{A^2+a}{m} \equiv 6 \pmod{16}$.

\item[(Case 2)] $m \equiv 3, 11 \pmod{16}$. Let $a=2a_1$ where $a_1 \equiv 1 \pmod{16}$ is an odd prime, $a_1 \nmid m$ with $(\tfrac{-a}{p})=1$ for all $p \vert m$. Setting $A \equiv 0 \pmod{4}$ allows us to mimic the previous case exactly.

\item[(Case 3)] $m \equiv 5, 13 \pmod{16}$. Let $a=2a_1$ where $a_1 \equiv 7 \pmod{16}$ is an odd prime, $a_1 \nmid m$ with $(\tfrac{-a}{p})=1$ for all primes $p \vert m$. Then set $A \equiv 0 \pmod{4}$ and proceed as before.

\item[(Case 4)] $m \equiv 7, 15 \pmod{16}$. Let $a=2a_1$ where $a_1 \equiv 5 \pmod{16}$ is an odd prime, $a_1 \nmid m$ and $(\tfrac{-a}{p})=1$ for all primes $p \vert m$. Once again, selecting $A \equiv 0 \pmod{4}$ gives a claim identical to the other cases.

\item[(Case 5)] $m=2m_1 \equiv 2 \pmod{16}$ with $m_1 \equiv 1 \pmod8$. Let $a=8a_1$ where $a_1 \equiv 1 \pmod{32}$ is an odd prime, $a_1 \nmid m$ and $(\tfrac{-a}{p})=1$ for all primes $p \vert m_1$. Then
$$
1  =  \prod_{p|m_1}\left(\dfrac{-a}{p}\right)
   =  \prod_{p|m_1}\left(\dfrac{a_1}{p}\right)
   =  \left(\dfrac{m_1}{a_1}\right)
   =  \left(\dfrac{-8m}{a_1}\right).
$$
Choose $A \equiv 2 \pmod{32}$. Then, $\tfrac{A^2+a}{m} \equiv 6 \pmod{16}$ and $A^2+a \equiv 12 \pmod{16}$. 

\item[(Case 6)] $m=2m_1 \equiv 6 \pmod{16}$ with $m_1 \equiv 3 \pmod8$. Let $a=32a_1$ where $a_1\equiv 1 \pmod{32}$ is a prime with $a_1 \nmid m$ and $(\tfrac{-a}{p})=1$ for all primes $p \vert m_1$. Choosing $A \equiv 2 \pmod{32}$, we then mimic the previous case.

\item[(Case 7)] $m=2m_1 \equiv 10 \pmod{16}$ where $m_1 \equiv 5 \pmod8$. Let $a=8a_1$ where $a_1 \equiv 7 \pmod{32}$ is an odd prime, $a_1 \nmid m$ and $(\tfrac{-a}{p})=1$ for all $p \vert m_1$. Again selecting $A \equiv 2 \pmod{32}$ allows us to refer to an earlier case.
\end{itemize}

This completes the proof of Theorem \ref{RD}(e)

\subsection{The form {$x^2+2y^2+5z^2$}} 
\textbf{\\}
We note that the forms of determinant $10$ are $Q_1 = x^2+2y^2+5z^2$, $Q_2 = 2x^2+2y^2+2xz+3z^2$ and $Q_3= x^2+y^2+10z^2$. 
\begin{lemma}
Let $m \equiv 6 \pmod{16}$. Then $m$ is not represented by $Q_2$ and $m$ is not represented by $Q_3$.
\end{lemma}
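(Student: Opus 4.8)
The plan is to reduce each of the two non-representability claims to a finite congruence computation, treating $Q_3$ directly modulo $16$ and $Q_2$ by first stripping off a factor of $2$ and then working modulo $8$.

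For $Q_3 = x^2 + y^2 + 10z^2$ I would argue modulo $16$. The squares modulo $16$ are precisely $\{0,1,4,9\}$, so $x^2 + y^2$ runs over $\{0,1,2,4,5,8,9,10,13\}$ modulo $16$, while $10z^2$ takes only the three values $0$, $8$, $10$ modulo $16$ (according as $4 \mid z$, $z \equiv 2 \pmod 4$, or $z$ odd). Since the three residues $6 - 0 \equiv 6$, $6 - 8 \equiv 14$, and $6 - 10 \equiv 12$ modulo $16$ are all absent from $\{0,1,2,4,5,8,9,10,13\}$, the congruence $x^2 + y^2 + 10z^2 \equiv 6 \pmod{16}$ is unsolvable, so no integer $\equiv 6 \pmod{16}$ is represented by $Q_3$. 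I note that the full modulus $16$ is genuinely needed here: modulo $8$ the value $6$ does occur (for instance $2^2 + 0^2 + 10\cdot 1^2 \equiv 6 \pmod 8$), so one cannot get away with a smaller modulus.

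For $Q_2 = 2x^2 + 2y^2 + 2xz + 3z^2$, I would first observe that $Q_2 \equiv z^2 \pmod 2$, so any representation of an even number forces $z$ even. Writing $z = 2w$ and completing the square, $Q_2 = 2\bigl((x+w)^2 + y^2 + 5w^2\bigr)$, so a representation of $m \equiv 6 \pmod{16}$ would produce integers with $(x+w)^2 + y^2 + 5w^2 = m/2 \equiv 3 \pmod 8$. Since $x$ can be chosen freely once $x+w$ and $w$ are fixed, it suffices to check that $u^2 + y^2 + 5w^2 \equiv 3 \pmod 8$ has no solution. Modulo $8$ the squares lie in $\{0,1,4\}$, so $u^2 + y^2$ ranges over $\{0,1,2,4,5\}$ and $5w^2$ over $\{0,4,5\}$; adding, $u^2 + y^2 + 5w^2$ is confined to $\{0,1,2,4,5,6,7\}$ modulo $8$, which does not contain $3$. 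This contradiction finishes the proof.

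There is no substantial obstacle here: both parts are routine finite checks. The only points demanding a little attention are choosing the correct modulus in each case — $16$ for $Q_3$, but only $8$ for $Q_2$ after halving — and, for $Q_2$, carrying out the substitution $z = 2w$ together with the identity $x^2 + 2xw + 6w^2 = (x+w)^2 + 5w^2$ that makes the reduction to modulus $8$ possible.
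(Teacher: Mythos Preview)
Your argument is correct: both parts reduce to finite residue checks, which is exactly what the paper has in mind when it writes ``The proof is simple, and is left to the reader.'' Your treatment of $Q_2$ via the substitution $z=2w$ and the identity $2x^2+2y^2+4xw+12w^2 = 2\bigl((x+w)^2+y^2+5w^2\bigr)$ is a clean way to carry out that exercise.
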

\begin{proof}
The proof is simple, and is left to the reader.
\end{proof}
\begin{lemma}
$25m$ is represented by $Q_1$ if and only if $m$ is represented by $Q_1$.
\end{lemma}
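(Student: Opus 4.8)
The plan is to prove both directions of the equivalence, with the backward direction being essentially trivial and the forward direction requiring a short $3$-adic descent argument analogous to the other "$p^2$-descent" lemmas already established in the paper. For the backward direction: if $m = x^2 + 2y^2 + 5z^2$ for some $x,y,z \in \mathbb{Z}$, then $25m = (5x)^2 + 2(5y)^2 + 5(5z)^2$, so $25m$ is represented by $Q_1$. So it suffices to handle the forward direction.

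For the forward direction, suppose $25m = x^2 + 2y^2 + 5z^2$ with $x,y,z \in \mathbb{Z}$. First I would reduce modulo $5$: since $25m \equiv 0 \pmod 5$, we get $x^2 + 2y^2 \equiv 0 \pmod 5$. Because $-2$ is not a quadratic residue mod $5$ (the squares mod $5$ are $0,1,4$, and $-2 \equiv 3$), the only way $x^2 \equiv -2y^2 \pmod 5$ can hold is if $x \equiv y \equiv 0 \pmod 5$. Writing $x = 5x_1$, $y = 5y_1$, we obtain $25m = 25x_1^2 + 50 y_1^2 + 5z^2$, hence $5m = 5x_1^2 + 10 y_1^2 + z^2$, which forces $z \equiv 0 \pmod 5$ as well; writing $z = 5z_1$ gives $5m = 5x_1^2 + 10 y_1^2 + 25 z_1^2$, i.e. $m = x_1^2 + 2y_1^2 + 5 z_1^2$. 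Thus $m$ is represented by $Q_1$.

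I expect the main (and only mild) obstacle to be organizing the two-step descent cleanly: one cannot conclude $5 \mid x, y, z$ all at once from a single congruence mod $5$, because $5z^2$ is divisible by $5$ for every $z$. The argument must instead peel off one factor of $5$ at a time — first forcing $5 \mid x$ and $5 \mid y$ from $25m \equiv 0 \pmod 5$, substituting, then re-examining the resulting identity $5m = 5x_1^2 + 10y_1^2 + z^2$ modulo $5$ to force $5 \mid z$. This is the same bookkeeping that appears in the determinant-$2$, $3$, and $6$ descent lemmas above, just with the nonresidue $-2 \pmod 5$ playing the role of the obstruction, so no new ideas are needed.
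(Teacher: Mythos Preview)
Your proof is correct and follows essentially the same approach as the paper: both argue that $-2$ is a quadratic nonresidue modulo $5$ to force $5\mid x$ and $5\mid y$, then deduce $5\mid z$ and divide out. (Your phrase ``$3$-adic descent'' is a slip for $5$-adic, but the actual argument is correct.)
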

\begin{proof}
One direction is trivial. Suppose $25m$ is represented by $x^2+2y^2+5y^2$. Then $x^2+2y^2+5y^2\equiv{0} \pmod {5}$ so $x^2\equiv{-2y^2} \pmod5$, which implies $x,y\equiv0 \pmod 5$. We now have $0+0+5z^2\equiv0 \pmod{25}$, so $5|z$. Substituting $z=5z_1$, $x=5z_1$ and $y=5y_1$ we see $$m = x_1^2+y_1^2+5z_1^2.$$ 
\end{proof}
\begin{lemma}
If $m = 25^k (25 \ell \pm 10)$, for $k, \ell \in \mathbb Z$, then $m$ is not represented by $Q_1$.
\end{lemma}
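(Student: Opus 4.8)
The plan is to reduce the general case to the base case $m \equiv \pm 10 \pmod{25}$ via the preceding lemma, and then handle the base case by a short congruence argument modulo $25$. First I would invoke the lemma stating that $25m$ is represented by $Q_1$ if and only if $m$ is; iterating this, it suffices to show that no $m$ with $m \equiv \pm 10 \pmod{25}$ is represented by $Q_1 = x^2 + 2y^2 + 5z^2$. So assume for contradiction that $x^2 + 2y^2 + 5z^2 = m$ with $m \equiv \pm 10 \pmod{25}$.

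Next I would work modulo $5$: from $x^2 + 2y^2 + 5z^2 \equiv 0 \pmod 5$ we get $x^2 \equiv -2y^2 \equiv 3y^2 \pmod 5$. Since $3$ is not a quadratic residue mod $5$ (the squares mod $5$ are $0,1,4$), the only possibility is $x \equiv y \equiv 0 \pmod 5$. Write $x = 5x_1$, $y = 5y_1$; then $25x_1^2 + 50y_1^2 + 5z^2 = m$, so $5z^2 = m - 25(x_1^2 + 2y_1^2)$, forcing $5 \mid m$ (already known) and then $z^2 \equiv m/5 \pmod 5$. But $m \equiv \pm 10 \pmod{25}$ means $m/5 \equiv \pm 2 \pmod 5$, and neither $2$ nor $3$ is a square mod $5$ — contradiction. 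Hence no such representation exists.

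This establishes the lemma. The only point requiring care is bookkeeping with the exponent $k$: one should state that the equivalence $Q_1$ represents $25m \iff Q_1$ represents $m$ is applied $k$ times to strip off the factor $25^k$, leaving $25\ell \pm 10$, which is the base case handled above. I expect no real obstacle here; the quadratic-residue computations mod $5$ are elementary, and the descent lemma does all the structural work. One could alternatively phrase the base case by noting directly that $x^2 + 2y^2 + 5z^2 \pmod{25}$ never attains $\pm 10$, verified by checking residues, but the mod-$5$ argument above is cleaner and avoids an exhaustive check.
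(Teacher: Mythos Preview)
Your proof is correct and follows essentially the same approach as the paper: reduce via the previous lemma (iterated $k$ times) to the base case $m \equiv \pm 10 \pmod{25}$, then dispose of that case by a congruence argument. The paper simply declares the base case ``trivial'' without details, whereas you supply the mod $5$ argument showing $x\equiv y\equiv 0\pmod 5$ and then $z^2\equiv \pm 2\pmod 5$, which is a clean way to fill in that gap.
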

\begin{proof}
When $m \equiv \pm 10 \pmod{25}$ this claim is trivial. Otherwise, the result follows from the previous lemma.
\end{proof}
Now let $m \in \mathbb Z$, where $25 \nmid m$ and with $m \neq 25^k (25 \ell \pm 10)$. 
Consider the ternary quadratic form $f(x,y,z)$ of determinant $D=10$ given by:
\begin{center}
$mf(x,y,z)= (Ax+By+mz)^2+ax^2+2hxy+by^2,$
\end{center}
where necessarily $ab-h^2 = 10m$. Choose $h,b$ such that $h\equiv{0}\equiv{b} \pmod {10m}$. We also wish to choose $A,B,a,h,b$ so that 
$$A^2+a \equiv B^2 +b \equiv 2AB+2h \equiv 0 \pmod{m},$$ 
where $\tfrac{A^2+a}{m} \equiv 6 \pmod{16}.$  This will guarantee $m$ is represented by $x^2+2y^2+5z^2$. Furthermore, let $B \equiv 0 \pmod{m}$. We see that $B^2 +b \equiv 2AB+2h \equiv 0 \pmod{m}$. Now we must choose $A$ and $a$ so that $A^2+a\equiv0 \pmod{m}$ and $\tfrac{A^2+a}{m} \equiv 6 \pmod{16}$. We proceed by cases:
\begin{itemize}
\item[(Case 1)] $m$ odd with $5\nmid m$. Let $a$ be an odd prime with $a\nmid m$ and $\left(\frac{-a}{p}\right) = 1$ for all primes $ p|m$. We have two subcases:
\begin{itemize}
\item[(Subcase 1)] $m\equiv1 \pmod 4$. Let $a\equiv{13,37} \pmod {400}$ be prime. This choice of $a$ satisfies: 
$$
1  =  \prod_{p|m}{\left(\frac{-a}{p}\right)}
   =  \left(\frac{m}{a}\right)
   =  \left(\frac{-2}{a}\right)\left(\frac{a}{5}\right)\left(\frac{m}{a}\right)
   =  \left(\frac{-10m}{a}\right).
$$
We now consider four further subcases: $m\equiv{1,9,13,17} \pmod{20}$. For the remainder of the proof, we refer the reader to Table 1; this shows the complete list of $A$ values (dependent upon $m$) which we yield $\frac{A^2+a}{m} \equiv{E} \pmod{400}$ where $E\in\mathbb{Z}, E > 0$, and $E\equiv6 \pmod{16}$.
\item[(Subcase 2)] $m\equiv3 \pmod 4$. We will add the constraint that $a\equiv17 \pmod {400}$. We again refer to Table 1 to consider the subsequent four subcases.
\end{itemize}
\item[(Case 2)] $m$ even and $5\nmid m$. Choose $a\equiv{3} \pmod {800}$ to be prime with $a \nmid m$ and $\left(\frac{-a}{p}\right) = 1$ $\forall p|m_1$. Again, we have two subcases:
\begin{itemize}
\item[(Subcase 1)]
Let $m=2m_1\equiv2\pmod8$ where $m_1 \equiv 1 \pmod{4}$. Our choice of $a$ satisfies:
$$
1  =  \prod_{p|m_1}{\left(\frac{-a}{p}\right)}
   =  \left(\frac{m_1}{a}\right)
   =  \left(\frac{2m}{a}\right)
   =  \left(\frac{-10m}{a}\right).
$$
As we have done previously, we now consider four further subcases, $m\equiv{2,18,26,34} \pmod{40}$ and refer the reader to Table 1 for the choices of $A$.
\item[(Subcase 2)] Let $m=2m_1\equiv6 \pmod8$ where $m_1 \equiv 3 \pmod 4$. 
Again, we refer the reader to Table 1 for the four following subcases and their respective choices of $A$.
\end{itemize}

\item[(Case 3)] $m$ odd and $5 \vert m$. Write $m=5m_1$ where necessarily $m_1\equiv{1,4} \pmod{5}$. Now we choose $a = 5a_1$ where $a_1$ is an odd prime with $a_1\nmid m$ and $(\tfrac{-a}{p}) = 1$ for all $p \vert m_1$. We now have two subcases: 
\begin{itemize}
\item[(Subcase 1)]
Let $m\equiv5 \pmod {20}$ and $m_1\equiv1 \pmod4$. We choose $a_1\equiv1 \pmod{400}$ so that
$$
1  =  \prod_{p|m_1}{\left(\frac{-a}{p}\right)}
   =  \left(\frac{m_1}{a_1}\right)
   =  \left(\frac{-2}{a}\right)\left(\frac{5m}{a}\right)
   =  \left(\frac{-10m}{a}\right).$$
As is frequently the case, we now consider two further subcases: $m\equiv{5,45} \pmod{100}$. Moreover we refer the reader to Table 1 for the choices of $A$. 
\item[(Subcase 2)]
Let $m\equiv15 \pmod {20}$ and $m_1\equiv 3 \pmod 4$. Choose $a_1\equiv13 \pmod{400}$, and proceed to Table 1 for both subcases and choices of $A$.
\end{itemize}
\item[(Case 4)] $m$ even and $5 \vert m$. Then $m=10m_1$ where $m_1\equiv{2,3} \pmod{5}$. We set $a =5a_1$, where $a_1 \equiv 7 \pmod{800}$ is a prime with $a_1 \nmid m$ and with $1 = (\tfrac{-a}{p})$ for all $p \vert m_1$. We have two subcases:
\begin{itemize}
\item[(Subcase 1)] $m\equiv10 \pmod {40}$. Thus $m_1\equiv{1} \pmod{4}$ and as a result:
$$
1  =  \prod_{p|m_1}{\left(\frac{-a}{p}\right)}
   =  (-1) \left(\frac{m_1}{a_1}\right)
   =  \left(\frac{-1}{a_1}\right)\left(\frac{m_1}{a_1}\right)
   =  \left(\frac{-10m}{a}\right).
$$
As we have done previously, we consider two further subcases: $m\equiv{130,170} \pmod{200}$ and refer the reader to Table 1.
\item[(Subcase 2)] $m\equiv30 \pmod {40}$ and thus $m_1\equiv3 \pmod4$. Once again, we have two subcases $(m \equiv 30,70 \pmod{200})$ and refer the reader to Table 1.

\end{itemize}
\end{itemize}

Note that we did not consider any case where $m\equiv0 \pmod4$, i.e. $m = 4^k m_1$ for some $k,m_1\in\mathbb{Z}$ where $4\nmid m_1$. This omission is acceptable since, if $Q_1$ represents $m_1$, then $Q_1$ clearly represents $m$ as well. If $Q_1$ does not represent $m_1$ then $m_1\equiv{\pm 10} \pmod{25}$ so $m\equiv{\pm 10} \pmod{25}$ and $Q_1$ does not represent $m$.\\


\begin{longtable}{|c|c|c|c|}
\caption[Data For Subcases]{Data for Subcases} \label{grid_mlmmh} \\
\hline
(Case, Subcase) & $m$ & $A^2:=Em-a$ & Values for $A^2$ \\ 
\hline
$(1,1)$ & $1\pmod{20}$ & $22m-13$ & $9, 49, 89, 129, 169, 209, 249, 289, 329, 369 \pmod{400}$ \\
$(1,1)$ & $9\pmod{20}$ & $22m-37$ & $1, 41, 81, 121, 161, 201, 241, 281, 321, 361 \pmod{400}$ \\
$(1,1)$ & $13\pmod{20}$ & $22m-37$ & $9, 49, 89, 129, 169, 209, 249, 289, 329, 369 \pmod{400}$ \\
$(1,1)$ & $17\pmod{20}$ & $22m-37$ & $1, 41, 81, 121, 161, 201, 241, 281, 321, 361 \pmod{400}$ \\
$(1,2)$ & $3\pmod{20}$ & $6m-17$ & $1, 41, 81, 121, 161, 201, 241, 281, 321, 361 \pmod{400}$ \\
$(1,2)$ & $7\pmod{20}$ & $6m-17$ & $ 9, 49, 89, 129, 169, 209, 249, 289, 329, 369 \pmod{400}$ \\
$(1,2)$ & $11\pmod{20}$ & $6m-17$ & $ 9, 49, 89, 129, 169, 209, 249, 289, 329, 369 \pmod{400}$  \\
$(1,2)$ & $19\pmod{20}$ & $22m-17$ & $1, 41, 81, 121, 161, 201, 241, 281, 321, 361 \pmod{400}$ \\
\hline
\hline
$(2,1)$ & $2\pmod{40}$ & $6m-3$ & $9, 89, 169, 249, 329, 409, 489, 569, 649, 729 \pmod{800}$ \\
$(2,1)$ & $18\pmod{40}$ & $38m-3$ & $41, 121, 201,281, 361,  441, 521, 601, 681, 761 \pmod{800}$ \\
$(2,1)$ & $26\pmod{40}$ & $22m-3$ & $9, 89, 169, 249, 329, 409, 489, 569, 649, 729 \pmod{800}$ \\
$(2,1)$ & $34\pmod{40}$ & $6m-3$ & $41, 121, 201,281, 361,  441, 521, 601, 681, 761 \pmod{800}$ \\
$(2,2)$ & $6\pmod{40}$ & $22m-3$ & $49, 129, 209, 289, 369, 449, 529, 609, 689, 769 \pmod{800}$ \\
$(2,2)$ & $14\pmod{40}$ & $38m-3$ & $49,129,209, 289, 369, 449, 529, 609, 689, 769 \pmod{800}$ \\
$(2,2)$ & $22\pmod{40}$ & $6m-3$ & $49, 129, 209, 289, 369, 449, 529, 609, 689, 769 \pmod{800}$ \\
$(2,2)$ & $38\pmod{40}$ & $38m-3$ & $1,81,161,241,321, 401,481,561,641,721 \pmod{800}$ \\
\hline
\hline
$(3,1)$ & $5\pmod{100}$ & $6m-5$ & $25, 225 \pmod{400}$ \\
$(3,1)$ & $45\pmod{100}$ & $54m-5$ & $25, 225 \pmod{400}$ \\
$(3,2)$ & $55\pmod{100}$ & $38m-65$ & $25, 225 \pmod{400}$ \\
$(3,2)$ & $95\pmod{100}$ & $22m-65$ & $25, 225 \pmod{400}$ \\
\hline
\hline
$(4,1)$ & $130\pmod{200}$ & $22m-35$ & $25, 425 \pmod{800}$ \\
$(4,1)$ & $170\pmod{200}$ & $38m-35$ & $25, 425 \pmod{800}$ \\
$(4,2)$ & $30\pmod{200}$ & $22m-35$ & $225, 625 \pmod{800}$ \\
$(4,2)$ & $70\pmod{200}$ & $38m-35$ & $225, 625 \pmod{800}$ \\

\hline

\end{longtable}

This completes the proof of Theorem \ref{RD}.

\section{Forms of Determinant Five}
There are two quadratic forms of determinant 5: $Q_1: x^2+y^2+5z^2$ and $Q_2: x^2+2y^2+2yz+3z^2$.
\begin{lemma}
If $4m$ is represented by $Q_1$, then $m$ is represented by $Q_1$.
\end{lemma}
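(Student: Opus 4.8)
The plan is to mimic the descent lemmas already used repeatedly in the excerpt (for $Q_2$ in the determinant-4 case, for $Q_1$ in the determinant-3 case, and so on). So I would start from the hypothesis that $Q_1 = x^2+y^2+5z^2$ represents $4m$, meaning there exist $x,y,z \in \mathbb Z$ with $x^2+y^2+5z^2 = 4m$. Reducing modulo $4$ gives $x^2+y^2+z^2 \equiv 0 \pmod 4$ (since $5z^2 \equiv z^2$). The key elementary fact is that $x^2+y^2+z^2 \equiv 0 \pmod 4$ forces $x,y,z$ all even: squares are $0$ or $1$ mod $4$, so a sum of three squares is $0$ mod $4$ only when each square is $0$ mod $4$, i.e. each variable is even.

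Once I know $x = 2x_1$, $y = 2y_1$, $z = 2z_1$, I substitute back: $4x_1^2 + 4y_1^2 + 20 z_1^2 = 4m$, hence $x_1^2 + y_1^2 + 5z_1^2 = m$, which exhibits $m$ as represented by $Q_1$. That is the whole argument; it is the same two-line structure as Lemma~\ref{L4} and its analogues, just with the coefficient $5$ instead of $2$ or $4$, and it works cleanly because $5 \equiv 1 \pmod 4$ so the form collapses to the sum of three squares modulo $4$.

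There is no real obstacle here — the only point requiring the slightest care is the claim that $x^2+y^2+z^2\equiv 0\pmod 4$ implies $2\mid x,y,z$, and that follows immediately from examining squares mod $4$. (By contrast, for forms whose leading off-diagonal behavior is different one sometimes needs mod $8$ or mod $16$, but not here.) I would therefore write the proof as: assume $Q_1$ represents $4m$; deduce $x^2+y^2+z^2\equiv 0\pmod 4$; conclude $x,y,z$ even; substitute and divide by $4$ to get $m = x_1^2+y_1^2+5z_1^2$. The reader should note this lemma is exactly what is needed, together with the trivial non-representation of $m\equiv 3\pmod 8$ by $Q_1$ and a $4$-power descent, to establish the ``only if'' direction of Theorem~\ref{D5}(a).
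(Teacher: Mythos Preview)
Your proof is correct and follows essentially the same approach as the paper's own proof: reduce $x^2+y^2+5z^2=4m$ modulo $4$, conclude $x,y,z$ are all even, and divide through by $4$. (The paper's version actually contains a typo, writing $x\equiv y\equiv z\equiv 0\pmod 4$ where $\pmod 2$ is meant; your argument is cleaner on this point.)
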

\begin{proof}
Suppose $\exists x,y,z\in{\mathbb{Z}}$ such that $4m = x^2+y^2+5z^2$. Thus $x^2+y^2+5z^2\equiv0 \pmod 4$. This implies $x \equiv y \equiv z \equiv 0 \pmod{4}$. Let $ x = 2x_1, y = 2y_1, z = 2z_1$. Then $m = x_1^2+y_1^2+5z_1^2$.
\end{proof}
\begin{lemma}
If $m = 4^k (8 \ell +3)$ for $k, \ell \in \mathbb{Z}$, then $m$ is not represented by $Q_1$.
\end{lemma}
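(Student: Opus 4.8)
The plan is to follow the same two-step template used throughout this section: first dispose of the base case $m \equiv 3 \pmod 8$ by a direct congruence obstruction, then bootstrap to all $m = 4^k(8\ell+3)$ using the preceding lemma. For the base case, suppose $Q_1 = x^2+y^2+5z^2$ represents some $m \equiv 3 \pmod 8$. Reducing modulo $8$, we need $x^2+y^2+5z^2 \equiv 3 \pmod 8$. Since squares modulo $8$ lie in $\{0,1,4\}$ and $5z^2 \pmod 8$ lies in $\{0,5,4\}$ (as $5\cdot 0=0$, $5\cdot 1 \equiv 5$, $5 \cdot 4 \equiv 4$), I would simply enumerate the finitely many combinations of $x^2, y^2 \in \{0,1,4\}$ and $5z^2 \in \{0,4,5\}$ and check that none of the nine (really six distinct) sums hits $3 \pmod 8$. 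This is the ``easy proof left to the reader'' style that appears in the parallel lemmas, so a one-line remark that $3 \pmod 8$ is not in the image of $Q_1$ modulo $8$ suffices.

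For the inductive step, I would invoke the immediately preceding lemma: if $4m$ is represented by $Q_1$ then $m$ is represented by $Q_1$; contrapositively, if $m$ is not represented then $4m$ is not represented. Starting from the base case (any $m_0 \equiv 3 \pmod 8$ is not represented), an easy induction on $k$ shows that $4^k m_0 = 4^k(8\ell+3)$ is not represented by $Q_1$ for every $k \geq 0$, which is exactly the claim.

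The only subtlety worth double-checking in the preceding lemma's proof is the assertion ``$x^2+y^2+5z^2 \equiv 0 \pmod 4$ implies $x \equiv y \equiv z \equiv 0 \pmod 2$'' — note the lemma as stated writes $\pmod 4$, which is slightly stronger than needed and should presumably read $\pmod 2$; since squares mod $4$ are $0$ or $1$, and $5z^2 \equiv z^2 \pmod 4$, the sum of three squares being $\equiv 0 \pmod 4$ forces all three to be even, after which substituting $x=2x_1$ etc.\ and dividing by $4$ gives $m = x_1^2+y_1^2+5z_1^2$. I would treat that lemma as given and not belabor it.

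There is essentially no obstacle here: the statement is a pure non-representation result, and non-representation results in this paper are all of the ``local obstruction plus scaling lemma'' form. The main (trivial) task is the finite verification that $3$ is a nonresidue for the relevant ternary modulo $8$; everything else is the standard $4^k$-descent already packaged in the previous lemma. I would present it in two sentences mirroring the corresponding lemma for $x^2+y^2+6z^2$ (``If $m \equiv 3 \pmod 8$, that $m$ is not represented by $Q_1$ is an easy proof left to the reader; the greater claim follows from the previous lemma'').
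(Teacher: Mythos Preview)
Your proposal is correct and follows essentially the same approach as the paper: the paper's proof reads ``If $m \equiv 3 \pmod{8}$, that $m$ is not represented by $Q_1$ is a simple proof left to the reader. Otherwise, the claim follows from the previous lemma,'' which is exactly the base case plus $4^k$-descent you outline. Your added remark about the typo in the preceding lemma (where the paper writes $\pmod{4}$ but means $\pmod{2}$) is accurate and does not affect the argument.
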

\begin{proof}
If $m \equiv 3 \pmod{8}$, that $m$ is not represented by $Q_1$ is a simple proof left to the reader. Otherwise, the claim follows from the previous lemma.
\end{proof}
\begin{lemma}
If $25m$ is represented by $Q_2$, then $m$ is represented by $Q_2$.
\end{lemma}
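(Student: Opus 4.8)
The statement to prove is: if $25m$ is represented by $Q_2 = x^2 + 2y^2 + 2yz + 3z^2$, then $m$ is represented by $Q_2$. This is the companion to the lemma already proved for $Q_1 = x^2 + y^2 + 5z^2$, and the strategy will be the familiar $p$-adic descent argument used throughout the paper. The plan is to suppose $25m = x^2 + 2y^2 + 2yz + 3z^2$ for integers $x,y,z$, and to show first that $5 \mid x$, $5 \mid y$, $5 \mid z$; then substituting $x = 5x_1$, $y = 5y_1$, $z = 5z_1$ and dividing by $25$ yields $m = x_1^2 + 2y_1^2 + 2y_1 z_1 + 3z_1^2$, as desired.

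The first step is to reduce the equation mod $5$: $x^2 + 2y^2 + 2yz + 3z^2 \equiv 0 \pmod 5$. I would complete the square in the binary part, writing $2y^2 + 2yz + 3z^2 = 2(y + \tfrac{1}{2}z)^2 + \tfrac{5}{2}z^2$, or more cleanly multiply through by $2$ to get $2x^2 + (2y+z)^2 + 5z^2 \equiv 0 \pmod 5$, hence $2x^2 + (2y+z)^2 \equiv 0 \pmod 5$. Since $-2$ is a non-residue mod $5$ (the squares mod $5$ are $0,1,4$, and $-2 \equiv 3$ is not among them), the only way $2x^2 \equiv -(2y+z)^2 \pmod 5$ can hold is $x \equiv 0$ and $2y + z \equiv 0 \pmod 5$, i.e. $z \equiv -2y \pmod 5$. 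Substituting back, $2y^2 + 2yz + 3z^2 \equiv 2y^2 + 2y(-2y) + 3(4y^2) = 2y^2 - 4y^2 + 12y^2 = 10 y^2 \equiv 0 \pmod 5$ automatically, so at this stage I only know $5 \mid x$ and $z \equiv -2y \pmod 5$.

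The second step upgrades this to divisibility by $5$ for $y$ and $z$ individually, using the factor $25$. Write $x = 5x_1$ and $z = -2y + 5w$ for integers $x_1, w$. Plugging into $25m = x^2 + 2y^2 + 2yz + 3z^2$ and expanding, the $x$-term contributes $25 x_1^2$, and the binary part $2y^2 + 2yz + 3z^2$ becomes (after the substitution $z = -2y+5w$) an expression of the form $10y^2 + 5(\text{something})$; carefully collecting, one finds $2y^2 + 2yz + 3z^2 = 5\big(2y^2 - 2yw + \ldots\big) \cdot$ — the point is that every term is divisible by $5$, so after dividing the whole equation by $5$ we get $5m = 5x_1^2 + (\text{binary in } y, w)$, forcing the binary form in $y$ and $w$ to be divisible by $5$, and then a second mod-$5$ analysis of that binary (again exploiting that its determinant makes $-2$ or a non-residue appear) forces $5 \mid y$ and $5 \mid w$, hence $5 \mid z$. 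The main obstacle is keeping the arithmetic of this two-stage descent straight: one must verify that the residual binary form appearing after the first division is again anisotropic mod $5$ so that the second descent step actually forces both remaining variables to vanish mod $5$. Once $5 \mid x, 5 \mid y, 5 \mid z$ is established, substituting $x = 5x_1, y = 5y_1, z = 5z_1$ and cancelling $25$ gives $m = x_1^2 + 2y_1^2 + 2y_1z_1 + 3z_1^2 = Q_2(x_1,y_1,z_1)$, completing the proof. (The converse direction, that $Q_2(x,y,z) = m$ implies $Q_2(5x,5y,5z) = 25m$, is trivial and need not be stated.)
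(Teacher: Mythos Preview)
Your plan is correct in outline and in fact carries out by hand what the paper merely asserts via a computer check: the paper's proof simply says ``a computer search shows that this requires $x \equiv y \equiv z \equiv 0 \pmod{5}$'' and then substitutes. Your two-stage mod-$5$ descent is a genuine alternative that avoids the computer.

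One small correction to your second step: after substituting $x = 5x_1$ and $z = -2y + 5w$, the binary part becomes
\[
2y^2 + 2yz + 3z^2 = 10y^2 - 50yw + 75w^2 = 5\bigl(2y^2 - 10yw + 15w^2\bigr),
\]
so after dividing the whole equation by $5$ you get $5m = 5x_1^2 + 2y^2 - 10yw + 15w^2$. Reducing mod $5$ leaves only $2y^2 \equiv 0 \pmod 5$, which forces $5 \mid y$ but says nothing about $w$; the residual binary is \emph{not} anisotropic mod $5$. This is harmless, since $5 \mid y$ together with $z = -2y + 5w$ already gives $5 \mid z$, and the conclusion $5 \mid x,\,5 \mid y,\,5 \mid z$ follows exactly as you wanted. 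So your argument goes through once that expectation is adjusted, and it is more self-contained than the paper's computer verification.
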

\begin{proof}
Suppose $25m$ is represented by $Q_2$. A computer search shows that this requires $x \equiv y \equiv z \equiv 0 \pmod{5}$. Substituting $x=5x_1, y=5y_1, z=5z_1$, we have $m= x_1^2+2y_1^2+2y_1z_1+3z_1^2$.
\end{proof}
\begin{lemma}
If $m \equiv 25^k (25 \ell \pm 5)$, then $m$ is not represented by $Q_2$.
\end{lemma}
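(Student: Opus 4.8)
The plan is to prove this exactly as the analogous lemmas for $Q_1$ (and for the $x^2+2y^2+5z^2$ case) were handled: reduce to the base exponent by the previous lemma and induct on $k$. Write $m=25^k(25\ell\pm 5)$. If $k\ge 1$, set $m=25m_1$ with $m_1=25^{k-1}(25\ell\pm 5)$; the contrapositive of the previous lemma states that if $m_1$ is not represented by $Q_2$, then neither is $25m_1=m$. So by induction on $k$ it suffices to treat the base case $k=0$, i.e.\ to show that $Q_2$ represents no $m$ with $m\equiv\pm 5\pmod{25}$.

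For the base case I would argue by contradiction. Suppose $Q_2(x,y,z)=m$ with $5\mid m$. Using the identity $2Q_2(x,y,z)=2x^2+(2y+z)^2+5z^2$, reducing mod $5$ gives $2x^2+(2y+z)^2\equiv 0\pmod 5$; since $-2\equiv 3$ is a non-residue modulo $5$, this forces $x\equiv 0\pmod 5$ and $2y+z\equiv 0\pmod 5$. Writing $x=5x_1$ and $z=-2y+5t$ and substituting back yields
\[
Q_2(x,y,z)=25x_1^2+10y^2-50yt+75t^2=5\bigl(5x_1^2+2y^2-10yt+15t^2\bigr),
\]
so that $m/5\equiv 2y^2\pmod 5$. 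But $m\equiv\pm 5\pmod{25}$ forces $m/5\equiv\pm 1\pmod 5$, whereas $2y^2\bmod 5\in\{0,2,3\}$; this contradiction settles the base case, and the induction is complete.

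I do not expect a genuine obstacle here. The only point needing care is the forcing step in the base case, namely that the pair consisting of the square $x^2$ and the binary $2y^2+2yz+3z^2$ is anisotropic modulo $5$ — equivalently, that $Q_2$ is $5$-adically anisotropic on the level where $5\mid Q_2(x,y,z)$ — which is precisely what gives $x\equiv 2y+z\equiv 0\pmod 5$. If one prefers to avoid the manipulation, the base case can instead be dispatched, exactly as the paper does for its other elementary non-representation lemmas, by observing that $Q_2(x,y,z)\bmod 25$ takes only finitely many values and checking directly (or by computer) that $\pm 5$ are not among them. Either way the reduction to the base case via the previous lemma is immediate, so the lemma follows.
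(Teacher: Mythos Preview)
Your proposal is correct and follows the same structure as the paper: induct on $k$ via the contrapositive of the previous lemma, reducing to the base case $m\equiv\pm 5\pmod{25}$. The paper simply declares the base case ``a simple proof left to the reader,'' whereas you actually supply one via the identity $2Q_2=2x^2+(2y+z)^2+5z^2$; this is a clean way to fill in what the paper omits, but the overall approach is the same.
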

\begin{proof}
If $m \equiv \pm 5 \pmod{25}$, that $m$ is not represented by $Q_2$ is a simple proof left to the reader. Otherwise, the claim follows from the previous lemma.
\end{proof}

\subsection{The form {$x^2+y^2+5z^2$}}
\textbf{\\}
We now show that that $x^2+y^2+5z^2$ represents all integers $m\neq 4^k(8\ell+3)$ for $k,\ell \in \mathbb Z^{\geq 0}$. Let $m \in \mathbb Z$, where $4 \nmid m$ and with $m \neq 8 \ell+3$.  
As we have done previously, consider the form $f(x,y,z)$ of determinant $D=5$ given by $$mf(x,y,z)= (Ax+By+mz)^2+(ax^2+2hxy+by^2).$$ We see that $ab-h^2 = 5m$. Choose $h,b$ such that $h\equiv{0}\equiv{b} \pmod {5m}$. We need
$$A^2+a \equiv B^2 + b \equiv 2AB + 2h \equiv 0 \pmod{m}.$$
The second two conditions are met if $B\equiv0 \pmod m$ and the first condition is met if we choose an $a$ such that $\left(\frac{-a}{p}\right)=1$ $\forall$ odd primes $p$ where $p \vert m$. Additionally, we will force $\tfrac{A^2+a}{m} \equiv 5 \pmod{25}$ to ensure $m$ is represented by $x^2+y^2+5z^2$. To choose $a$ appropriately, we proceed by cases on $m$.
\begin{itemize}
\item[(Case 1)]$m\equiv{1,2,5,6} \pmod 8$. Let $a$ be an odd prime with $a\nmid m$ and $\left(\frac{-a}{p}\right) = 1$ for all odd primes $p \vert m$. We now consider three subcases:
\begin{itemize}
\item[(Subcase 1)] $m \equiv 1,5 \pmod{8}$. Then let $a \equiv 1 \pmod{1000}$. We see that:
$$
1  =  \prod_{p|m}{\left(\frac{-a}{p}\right)}
   =  \prod_{p|m}{\left(\frac{a}{p}\right)}
   =  \left(\frac{m}{a}\right)
   =  \left(\frac{-5m}{a}\right).
$$
Note that here we can also allow $\tfrac{A^2+a}{m} \equiv 5 \pmod{1000}$, as $5m-1$ 
is always a perfect square $\pmod{1000}$. The result follows.
\item[(Subcase 2)] $m=2m_1\equiv2 \pmod 8$. Then $m_1 \equiv 1 \pmod{4}$. Let $a \equiv 1 \pmod{1000}$. Then we return to an identical situation to Subcase 1, including with the constraint $\tfrac{A^2+a}{m} \equiv 5 \pmod{1000}$.
\item[(Subcase 3)] $m=2m_1 \equiv6 \pmod 8$. Then $m_1 \equiv 3 \pmod{4}$. Let $a \equiv 11 \pmod{1000}$. Here we force $\frac{A^2+a}{m}\equiv{30} \pmod{1000}$, yet otherwise our work mimics that of earlier cases. 
\end{itemize}
\item[(Case 2)] $m\equiv7 \pmod 8$. We choose now $a= 2a_1$, where $a_1 \equiv 13 \pmod{1000}$ is an odd prime such that $a_1\nmid m$ and $(\tfrac{-a}{p})=1$ for all odd primes $p \vert m$. Then
$$
1  =  \prod_{p|m}{\left(\frac{-a}{p}\right)}
   =  (-1) \prod_{p|m}{\left(\frac{p}{a_1}\right)}
   =  (-1) \left(\frac{m}{a_1}\right)
   =  \left(\frac{-5m}{a_1}\right).
$$
Then all conditions are met, including $\tfrac{A^2+a}{m} \equiv 5 \pmod{1000}$, as $5m-26$ is always a perfect square $\pmod{1000}$. This completes the proof of Theorem \ref{D5}(a).
\end{itemize}

\subsection{{The form {$x^2+2y^2+2yz+3z^2$}}}
\textbf{\\}
We now show that that $x^2+2y^2+2yz+3z^2$ represents all integers $m \neq 25^k(25\ell \pm 5)$ for $k,\ell \in \mathbb Z^{\geq 0}$. Let $m \in \mathbb{Z}$, where $25 \nmid m$, and with $m \not\equiv \pm 5 \pmod{25}$. Consider the form $f(x,y,z)$ of determinant $D=5$ given by $$mf(x,y,z)= (Ax+By+mz)^2+(ax^2+2hxy+by^2).$$ We see that $ab-h^2 = 5m$. Choose $h,b$ such that $h\equiv{0}\equiv{b} \pmod {5m}$. 
We need
$$A^2+a \equiv B^2 + b \equiv 2AB + 2h \equiv 0 \pmod{m}.$$
The second two conditions are met if $B\equiv0 \pmod m$ and the first condition is met if we choose an $a$ such that $\left(\frac{-a}{p}\right)=1$ $\forall$ odd primes $p$ where $p \vert m$. Additionally, we will force $\tfrac{A^2+a}{m} \equiv 3 \pmod{8}$. We now proceed by cases on $m$. 

\begin{itemize}
\item[(Case 1)] $m$ odd and $5 \nmid m$. We create a list $E$ of congruence classes $\pmod{200}$ which are not $0 \pmod 5$ but which are $3 \pmod{8}$: $$E= \{ 3,11,19,27,43,51,59,67,75,83,91,99,107,123,131,139,147,163,171,179,187 \}.$$ 
\begin{itemize}
\item[(Subcase 1)] $m \equiv 1 \pmod{8}$. We call $M_{C}$ the possible values of $m \pmod{200}$:
$$M_{C}= \{  1,9,17,33,41,49,57,73,81,89,97,113,121,129,137,153,161,169,177,185,193\}.$$
By a computer check, for any two $m_1, m_2 \in M_{C}$, $\{ m_1 \cdot e, e \in E \} = \{ m_2 \cdot e, e \in E \}$. Let $E_m$ be the element of $E$ such that for fixed $m \in M_{C}$, $E_m \cdot m \equiv 3 \pmod{200}$. Let $a \nmid m$ be prime with $a \equiv 3 \pmod{200}$ and $\left(\frac{-a}{p}\right)=1$ for all $p \vert m$.Then
$$
1  =  \prod_{p|m} \left( \dfrac{-a}{p} \right)
   =  \left( \dfrac{m}{a} \right)
   =  \left( \dfrac{-5m}{a} \right).
$$
Last, choosing $A$ so that $A^2 \equiv 0 \pmod{200}$ completes the proof in this case.
\item[(Subcase 2)] $m \equiv 3 \pmod{8}$. Again, call $M_C$ the possible values of $m \pmod{200}$, and note that here $E=M_C$. We verify that for each $m \in M_{C}$, $\{ m \cdot e, e \in E \}=E$. Let $E_m$ then be the element of $E$ such that $E_m \cdot m \equiv 17 \pmod{200}$. Let $a \nmid m$ be a prime such that $a \equiv 17 \pmod{200}$, and $\left(\frac{-a}{p}\right)=1$ for all $p \vert m$. Choosing $A^2 \equiv 0 \pmod{200}$ again completes the proof.
\item[(Subcase 3)] $m \equiv 5 \pmod{8}$. Here we have
$$M_{C}= \{ 13,21,29,37,53,61,69,77,93,101,109,117,133,141,149,157,165,173,181,189,197 \}.$$ For any two $m_1, m_2 \in M_{C}$, $\{ m_1 \cdot e, e \in E \} = \{ m_2 \cdot e, e \in E \}$. Let $E_m$ be the element of $E$ such that for fixed $m \in M_{C}$, $E_m \cdot m = 7\pmod{200}$. Let $a \nmid m$ be a prime such that $a \equiv 7 \pmod{200}$, and $\left(\frac{-a}{p}\right)=1$ for all $p \vert m$. Once again choosing $A^2 \equiv 0 \pmod{200}$ gives the desired results.
\item[(Subcase 4)] $m \equiv 7 \pmod{8}$. Then
$$M_{C}= \{ 7,23,31,39,47,63,71,79,87,103,111,119,127,143,151,159,167,175,183,191,199 \}$$ Once again for any two $m_1, m_2 \in M_{C}$, $\{ m_1 \cdot e, e \in E \} = \{ m_2 \cdot e, e \in E \}$. Let $E_m$ be the element of $E$ such that for fixed $m \in M_{candidates}$, $E_m \cdot m = 13 \pmod{200}$. Let $a \nmid m$ be prime with $a \equiv 7 \pmod{200}$, and $\left(\frac{-a}{p}\right)=1$ for all $p \vert m$. With $A$ chosen such that $A^2 \equiv 0 \pmod{200}$ the claim follows.
\end{itemize}

\item[(Case 2)] $m$ even and $5 \nmid m$. We have the following cases:
\begin{itemize}
\item[(Subcase 1)] $m=2m_1 \equiv 2 \pmod{16}$ where $m_1 \equiv 1\pmod{8}$. We now consider $M_C$, the collection of possible values of $m \pmod{400}$. Thus $$M_{C} = \{ 2,18,34,66,82,98,114,146,162,178,194,226,242,258,274,306,322,338,354,386 \}$$ and we similarly extend the definition of $E$. Note that here $$m \cdot E = \{ 6,22,38,54,86,102,118,134,166,182,198,214,246,262,278,294,326,342,358,374 \}.$$
Let $E_m\in E$ be the element satisfying $E_m \cdot m \equiv 6 \pmod{400}$. Then choose $a=2a_1$ where $a_1 \equiv 1 \pmod{400}$ is prime, with $a_1 \nmid m$ and $\left(\frac{-a}{p}\right)=1$ for all odd $p \vert m_1$. We see that:
$$
1  =  \prod_{p|m_1} \left( \dfrac{-a}{p} \right)
   =  \left( \dfrac{-2}{m_1} \right)\prod_{p|m_1} \left( \dfrac{a_1}{p} \right)
   =  \left( \dfrac{m_1}{a_1} \right)
   =  \left( \dfrac{-5m}{a_1} \right).
$$
Last choose $A$ so that $A^2 \equiv 4 \pmod{400}$. 
\item[(Subcase 2)] $m=2m_1 \equiv 6 \pmod{16}$ where $m_1 \equiv 3 \pmod{8}$.
$$M_{C}= \{ 6,22,38,54,86,102,118,134,166,182,198,214,246,262,278,294,326,342,358,374\} \textrm{ and }$$
$$m \cdot E = \{ 2,18,34,66,82,98,114,146,162,178,194,226,242,258,274,306,322,338,354,386 \}.$$
Let $E_m$ be the element of $E$ such that $E_m \cdot m \equiv 2 \pmod{400}$. Then choose $a=2a_1$ where $a_1 \equiv 1 \pmod{400}$, $a_1 \nmid m$ is prime with $\left(\frac{-a}{p}\right)=1$ for all odd primes $p \vert m_1$. Then choose $A$ with $A^2 \equiv 0 \pmod{400}$ and the result follows.

\item[(Subcase 3)] $m=2m_1 \equiv 10 \pmod{16}$ where $m_1 \equiv 5 \pmod{8}$. Then for $m \in M_C$, we have $$m\cdot E = \{ 14,46,62,78,94,126,142,158,174,206,222,238,254,286,302,318,334,366,382,398\}.$$
Let $E_m \in E$ be such that $E_m \cdot m \equiv 62 \pmod{400}$. Then choose $a=2a_1$ where $a_1 \equiv 29 \pmod{400}$ is a prime with $a_1 \nmid m$ and $(\tfrac{-a}{p})=1$ for all odd $p \vert m_1$. Then choose $A^2 \equiv 4 \pmod{400}$ and the result follows.
\item[(Subcase 4)] $m=2m_1 \equiv 14 \pmod{16}$ where $m_1 \equiv 7 \pmod{8}$. Note that $$M_C = \{ 14,
 46,
 62,
 78,
 94,
 126,
 142,
 158,
 174,
 206,
 222,
 238,
 254,
 286,
 302,
 318,
 334,
 366,
 382,
 398 \} \textrm{and }$$ $$m \cdot E = \{ 26,
 42,
 58,
 74,
 106,
 122,
 138,
 154,
 186,
 202,
 218,
 234,
 266,
 282,
 298,
 314,
 346,
 362,
 378,
 394\}.$$ Now let $E_m$ be the element of $E$ such that $E_m \cdot m \equiv 26 \pmod{400}$. We claim that $a=2a_1$ where $a_1 \equiv 11 \pmod{400}$, $a_1 \nmid m$ and $\left(\frac{-a}{p}\right)=1$ for all odd primes $p \vert m_1$ will suffice. Moreover, here $A$ can be chosen so that $A^2 \equiv 4 \pmod{400}$.

\end{itemize}
\item[(Case 3)] $m=5m_1$ odd with $m \not\equiv \pm 5 \pmod{25}$. As to be expected, we have subcases:
\begin{itemize}
\item[(Subcase 1)] $m_1 \equiv 1, 5 \pmod{8}$. Let $E_m \in E$ be such that $E_m \cdot m \equiv 5 \pmod{200}$. Let $a \equiv 1 \pmod{200}$, be prime with $a \nmid m$ and $\left(\tfrac{-a}{p}\right)=1$ for all primes $p \vert m_1$. We see that:
$$
1  =  \prod_{p|m_1} \left( \dfrac{-a}{p} \right)
   =  \left( \dfrac{m_1}{a} \right)
   =  \left( \dfrac{-5m}{a} \right).
$$
Then choose $A$ so that $A^2 \equiv 4 \pmod{200}$.
\item[(Subcase 2)] $m_1 \equiv 3,7 \pmod{8}$. Note also that $m_1 \equiv 2, 3 \pmod{5}$. Let $E_m \in E$ be such that $E_m \cdot m \equiv 5 \pmod{200}$. Let $a=5a_1$ where $a_1 \equiv 1 \pmod{200}$ is prime with $a_1 \nmid m$ and $\left(\tfrac{-a}{p}\right)=1$ for all primes $p \vert m_1$. Choosing $A$ such that $A^2 \equiv 0 \pmod{200}$ gives the desired result.
\end{itemize}
\item[(Case 4)] $m=5\cdot 2 \cdot m_1$ with $m \not\equiv \pm 5 \pmod{25}$. Again, we have two subcases:
\begin{itemize}
\item[(Subcase 1)] $m =10m_1$ and $m_1 \equiv 1,5 \pmod{8}$. Let $E_m \in E$ be such that $E_m \cdot m \equiv 10 \pmod{200}$. Let $a \equiv 1 \pmod{200}$, $a \nmid m$ be prime with $\left(\tfrac{-a}{p}\right)=1$ for all $p \vert m_1$. We see that:
$$
1  =  \prod_{p|m_1} \left( \dfrac{-a}{p} \right)
   =  \prod_{p|m_1} \left( \dfrac{a}{p} \right)
   =  \prod_{p|m_1} \left( \dfrac{p}{a} \right)
   =  \left( \dfrac{m_1}{a} \right)
   =  \left( \dfrac{-2m_1}{a} \right)
   =  \left( \dfrac{-5m}{a} \right).
$$
Then, let $A$ be such that $A^2 \equiv 9 \pmod{200}$.
\item[(Subcase 2)] $m=10m_1$ and $m_1 \equiv 3,7 \pmod{8}$. Let $E_m \in E$ be such that $E_m \cdot m \equiv 30 \pmod{200}$. Let $a \equiv 21 \pmod{200}$, $a \nmid m$ be prime with $\left(\tfrac{-a}{p}\right)=1$ for all $p \vert m_1$. We see that:
$$
1  =  \prod_{p|m_1} \left( \dfrac{-a}{p} \right)
   =  (-1) \prod_{p|m_1} \left( \dfrac{a}{p} \right)
   =  (-1) \prod_{p|m_1} \left( \dfrac{p}{a} \right)
   =  (-1) \left( \dfrac{m_1}{a} \right)
   =  \left( \dfrac{-2m_1}{a} \right)
   =  \left( \dfrac{-5m}{a} \right).
$$
Then, let $A$ be such that $A^2 \equiv 9 \pmod{200}$.
\end{itemize}
\end{itemize}

Note that we did not consider any case where $m\equiv0 \pmod4$, i.e. $m = 4^k m_1$ for some $k,m_1\in\mathbb{Z}$ where $4\nmid m_1$. This omission is acceptable since, if $Q_2$ represents $m_1$, then $Q_2$ clearly represents $m$ as well. If $Q_2$ does not represent $m_1$ then $m_1\equiv{\pm 5} \pmod{25}$ so $m\equiv{\pm 5} \pmod{25}$ and $Q_2$ does not represent $m$. This completes the proof of Theorem \ref{D5}.

\section{{Forms of Determinant Six}}
We now show that that $x^2+y^2+6z^2$ represents all integers $m\neq 9^k(9\ell+3)$ for $k,\ell \in \mathbb Z^{\geq 0}$. Let $m \in \mathbb{Z}$, where $9 \nmid m$ and with $m \not\equiv 3 \pmod{9}$. We refer the reader to the series of lemmas appearing in the proof of representation by $x^2+2y^2+3z^2$, as it is the other form of determinant $6$. We maintain notation and setup. Again choose $b \equiv h \equiv 0 \pmod{6m}$.
Consider
$$A^2+a \equiv B^2+b \equiv 2AB+2h \equiv  0 \pmod{m}.$$
The second and third congruence conditions are satisfied if we choose $B \equiv 0 \pmod{m}$. In addition to securing the first congruence condition is satisfied, we also enforce $\tfrac{A^2+a}{m} \equiv 10 \pmod{16}$. Subsequently we have three different cases.
\begin{itemize}
\item[(Case 1)]
$m \equiv 1 \pmod{4}$.
\begin{itemize}
\item[(Subcase 1)]
Assume $3 \nmid m$. Choose $a \equiv 1 \pmod{48}$ an odd prime, $a \nmid m$, and $(\tfrac{-a}{p})=1$ for all odd primes $p|m$. We see that:
$$
1  =  \prod_{p|m} \left(\dfrac{-a}{p}\right)
   =  \prod_{p|m} \left(\dfrac{p}{a}\right)
   =  \left(\dfrac{m}{a}\right)
   =  \left(\dfrac{-6m}{a}\right).
$$
Note we also have $\tfrac{A^2+a}{m} \equiv 10 \pmod{16}$. If $m \equiv 1 \pmod{8}$, then chose $A^2 \equiv 9 \pmod{16}$. Else, choose $A^2 \equiv 1 \pmod{16}$.
\item[(Subcase 2)]
$m \equiv 33 \pmod{36}$.
Let $m=3m_1$ for $m_1 \equiv 11 \pmod{12}$ and let $a := 3a_1$ where $a_1 \equiv 3 \pmod{8}$ is an odd prime with $a_1 \nmid m$ so that $(\tfrac{-a}{p})=1$ for all primes $p|m_1$. The remainder of the proof follows similar to Subcase 1; when $m \equiv 1 \pmod{8}$, then $a_1 \equiv 3 \pmod{16}$ and when $m \equiv 5 \pmod{8}$ let $a_1 \equiv 11 \pmod{16}$.
\end{itemize}
\item[(Case 2)]
$m \equiv 3 \pmod{4}$.
\begin{itemize}
\item[(Subcase 1)]
Assume $3 \nmid m$. Choose $a \equiv 13 \pmod{24}$ a prime, with $a \nmid m$ and $(\tfrac{-a}{p})=1$ for all primes $p|m$. 
All conditions are satisfied, including $\tfrac{A^2+a}{m} \equiv 10 \pmod{16}$. If $m \equiv 3 \pmod{8}$, then require $a \equiv 5 \pmod{16}$. If $m \equiv 7 \pmod{8}$, require $a \equiv13 \pmod{16}$.
\item[(Subcase 2)]
$m=3m_1 \equiv 15 \pmod{36}$.
Let $m=3m_1$ for $m_1 \equiv 5 \pmod{12}$ and let $a := 3a_1$ where $a_1 \equiv 7 \pmod{8}$ is a prime with $a_1 \nmid m$ and $(\tfrac{-a}{p})=1$ for all primes $p|m_1$. 
We can again choose $A^2$ so that $\tfrac{A^2+a}{m} \equiv 10 \pmod{16}$, as regardless of $m \equiv 3,7 \pmod{8}$, $a=3a_1 \equiv 5,13 \pmod{16}$.
\end{itemize}
\item[(Case 3)]
$m \equiv 2 \pmod{4}$ and $3 \nmid m$. We write $m=2m_1$ where $m_1 \equiv 1,3 \pmod{4}$. We choose $a \equiv 19 \pmod{24}$ a prime with $a \nmid m$ and $(\tfrac{-a}{p})=1$ for all primes $p \vert m_1$ to ensure that $$1= \left( \dfrac{-a}{p} \right) = \left(\dfrac{-6m}{a} \right).$$ To ensure that $A$ can be chosen such that $\tfrac{A^2+a}{m} \equiv 10 \pmod{16}$ we have (minor) cases on $m \pmod{32}$:
\begin{itemize}
\item[(Subcase 1)] $m \equiv 2, 18 \pmod{32}$. Then $10m \equiv 20 \pmod{32}$. Above the only restriction on $a$ was that $a \equiv 3 \pmod{8}$ which implies $a \equiv 3,11,19,27 \pmod{32}$. For each of these choices of lifts, a choice of $A$ exists (respectively, allow $A^2 \equiv 17,9,1,25 \pmod{32}$).
\item[(Subcase 2)] $m \equiv 6, 22 \pmod{32}$. Then $10m \equiv 28 \pmod{32}$. Again, we see that $a \equiv 3,11,19,27 \pmod{32}$ each yields an appropriate choice of $A^2$.
\item[(Subcase 3)] $m \equiv 10,26 \pmod{32}$. Here $10m \equiv 4 \pmod{32}$, and for each possible value of $a \pmod{32}$ there is a corresponding choice of $A$ satisfying $A^2+a \equiv 10m \pmod{32}$.
\item[(Subcase 4)] $m \equiv 14, 30 \pmod{32}$. This means $10m \equiv 12 \pmod{32}$ and for each possible value of $a \pmod{32}$ there exists a choice of $A$ satisfying $A^2+a \equiv 10m \pmod{32}$.
\end{itemize}
\item[(Case 4)] $m \equiv 2 \pmod{4}$ and $3 \vert m$. Here we write $m=6m_1$ where $m_1 \equiv 1,7 \pmod{12}$. We let $a:= 3a_1$ where $a_1 \equiv 1 \pmod{8}$ is an odd prime with $a_1 \nmid m$ so that $(\tfrac{-a}{p})=1$ for all primes $p \vert m_1$. This allows
$$
1  =  \prod_{p|m_1} \left(\dfrac{-a}{p}\right)
   =  \prod_{p|m_1} \left(\dfrac{a_1}{p}\right)
   =  \left(\dfrac{m_1}{a_1}\right)
   =  \left(\dfrac{-6m}{a_1}\right).
$$
Moreover, for each of the possibilities of $m \pmod{32}$, we have behavior identical to the previous subcases. 
\end{itemize}
Note that we did not consider any case where $m \equiv 0 \pmod4$, i.e. $m = 4^km_1$ for some $k,m_1 \in \mathbb{Z}$ where $4 \nmid m_1$. This is an acceptable omission. If $Q_1$ represents $m_1$, then $Q_1$ represents $m$ as well. If $Q_1$ does not represent $m_1$ then $m_1 \equiv 3 \pmod9$ so $m \equiv 3 \pmod9$ and $Q_1$ does not represent $m$. This completes the proof of Theorem \ref{D6}.


\begin{thebibliography}{fszw90}
\bibitem{Bhargava} Bhargava, M., \textit{On the Conway-Schneeberger fifteen theorem}, Quadratic forms and their applications (Dublin, 1999)
\bibitem{BH} Bhargava, M., and Hanke, J., \textit{Universal quadratic forms and the $290$-Theorem}, preprint 2005.
\bibitem{BI} Brandt, H., Intrau, O., \textit{Tabellen reduzierter positiver tern\"arer quadratischer Formen}, Abh. S\"achs. Akad. Wiss. Math.-Nat. Kl. 45 (1958), no. 4, 261
\bibitem{Cassels}{Cassels, J.W.S., \textit{An Introduction to the Geometry of Numbers}, Springer-Verlag, 1959.}
\bibitem{Cox} Cox, D., \textit{Primes of the form $x^2+ny^2$}, John Wiley 
\bibitem{DHist}{Dickson, L.E., \textit{History of the Theory of Numbers, Volume III: Quadratic and Higher Forms}, Dover Publications, 2012.}
\bibitem{Dickson}{Dickson, L.E., \textit{Integers represented by positive ternary quadratic forms.}, Bull. Amer. Math. Soc. 33 (1927), 63-70.}
\bibitem{Gauss}{Gauss, C.F., \textit{Besprechung des Buchs von L.A. Seeber: Untersuchungen \"Uber die Eigenschaften der positiven tern\"aren quadratischen Forem usw. G\"ottingsche Gelehrte Anzeigen}, 1831, Juli 9. Reprinted in Werke (1876), Vol. II, 188-196.}
\bibitem{DA} Gauss, C.F., \textit{Disquisitiones Arithmeticae}, trans. A.A. Clarke, Springer New York, 1986.
\bibitem{Jones}{Jones, B., \textit{The regularity of a genus of positive ternary quadratic forms}, Trans. Amer. Math. Soc. 33 (1931), 111-124.}
\bibitem{Kaplansky} Kaplansky, I., \textit{The first nontrivial genus of positive definite ternary forms}, Mathematics of Computation, Volume 64, Number 209, January 1995, pgs. 341-345.
\bibitem{Kelley} Kelley, J., \textit{Kaplansky's ternary quadratic form}, International Journal of Mathematics and Mathematical Sciences, Volume 25, Issue 5 (2001), pg. 289-292.
\bibitem{Legendre} Legendre, A.-M., \textit{Essai sur la th\`eorie des nombres}, Paris, An VI (1797-1798)
\bibitem{Mordell}{Mordell, L.J., \textit{On the representation of a number as a sum of three squares.}, Rev. Math. Pres Appl. 3 (1958), 25-27.}
\bibitem{Nebe}{\url{http://www.math.rwth-aachen.de/~Gabriele.Nebe/LATTICES/Brandt_1.html}}
\bibitem{Ramanujan} Ramanujan, S., \textit{On the expression of a number in the form $ax^2+by^2+cz^2+du^2$}, Proc. Camb. Phil. Soc. 19 (1916), 11-21.
\end{thebibliography}
\end{document}